\documentclass{article}

\usepackage{graphicx}
\usepackage{amsmath}
\usepackage{amssymb}
\usepackage{color}
\usepackage{hyperref}
\usepackage{epsfig}
\usepackage{tikz}
\usepackage{pgfplots}
\usepackage{caption}
\usepackage{float}
\allowdisplaybreaks

\newtheorem{theo}{Theorem}

\newtheorem{theorem}{Theorem}

\newtheorem{coro}{Corollary}
\newtheorem{defi}{Definition}

\newtheorem{prop}{Proposition}

\newtheorem{remark}{Remark}

\newcommand{\NN}{{\mathbb N}}

\newcommand{\RR}{{\mathbb R}}

\newenvironment{proof}{\begin{trivlist}
   \item[\hskip\labelsep{\it Proof}]}{$\hfill\Box$\end{trivlist}}

\newcommand{\notiz}[1]{}

\title{Some negative results related to Poissonian pair correlation problems}
\author{Gerhard Larcher \footnote{The author is supported by the Austrian Science Fund (FWF), Project F5507-N26, which is a part of the Special Research Program “Quasi-Monte Carlo Methods: Theory and Applications” and Project I1751-N26.} \ , Wolfgang Stockinger \footnote{The author is supported by the Austrian Science Fund (FWF), Project F5507-N26, which is a part of the Special Research Program “Quasi-Monte Carlo Methods: Theory and Applications”.}}
\date{}
\begin{document}
\maketitle 
\begin{abstract}
We say that a sequence $(x_n)_{n \in \NN}$ in $[0,1)$ has Poissonian pair correlations if 
\begin{equation*}
\lim_{N \to \infty} \frac{1}{N} \# \left \lbrace 1  \leq l \neq m \leq N: \| x_l - x_m \| \leq \frac{s}{N} \right \rbrace = 2s
\end{equation*} 
for every $s \geq 0$. The aim of this article is twofold. First, we will establish a gap theorem which allows to deduce that a sequence $(x_n)_{n \in \NN}$ of real numbers in $[0,1)$ having a certain weak gap structure, cannot have Poissonian pair correlations. This result covers a broad class of sequences, e.g., Kronecker sequences, the van der Corput sequence and in more general $LS$-sequences of points and digital $(t,1)$-sequences. Additionally, this theorem enables us to derive negative pair correlation properties for sequences of the form $(\lbrace a_n \alpha \rbrace)_{n \in \mathbb{N}}$, where $(a_n)_{n \in \NN}$ is a strictly increasing sequence of integers with maximal order of additive energy, a notion that plays an important role in many fields, e.g., additive combinatorics, and is strongly connected to Poissonian pair correlation problems. These statements are not only metrical results, but hold for all possible choices of $\alpha$.

Second, in this note we study the pair correlation statistics for sequences of the form, $x_n = \lbrace b^n \alpha \rbrace, \ n=1, 2, 3, \ldots$, with an integer $b \geq 2$, where we choose $\alpha$ as the Stoneham number and as an infinite de Bruijn word. We will prove that both instances fail to have the Poissonian property. Throughout this article $\lbrace \cdot \rbrace$ denotes the fractional part of a real number. 
\end{abstract} 
\section{Introduction and statement of main results}
The concept of Poissonian pair correlations has its origin in quantum mechanics, where the spacings of energy levels of integrable systems were studied. See for example \cite{not9} and the references cited therein for detailed information on that topic. Rudnik and Sarnak first studied this concept from a purely mathematical point of view and over the years the topic has attracted wide attention, see e.g., \cite{ not5, not10, not1, not2, not3}. \\

Let $\| \cdot \|$ denote the distance to the nearest integer. A sequence $(x_n)_{n \in \NN}$ of real numbers in $[0,1)$ has Poissonian pair correlations if the pair correlation statistics
\begin{equation}\label{eq:pc1}
F_N(s):= \frac{1}{N} \# \left\lbrace 1  \leq l \neq m \leq N: \| x_l - x_m \| \leq \frac{s}{N} \right\rbrace 
\end{equation} 
tends to $2s$, for every $s \geq 0$, as $N \to \infty$. \\

Recently, Aistleitner, Larcher, Lewko and Bourgain (see \cite{not6}) could give a strong link between the concept of Poissonian pair correlations and the additive energy of a finite set of integers, a notion that plays an important role in many mathematical fields, e.g., in additive combinatorics. To be precise, for a finite set $A$ of reals the additive energy $E(A)$ is defined as  
\begin{equation*}
E(A):= \sum_{a+b=c+d} 1,
\end{equation*}
where the sum is extended over all quadruples $(a,b,c,d) \in A^4$. Roughly speaking, it was proved in \cite{not6} that if the first $N$ elements of an increasing sequence of distinct integers $(a_n)_{n \in \NN}$, have an arbitrarily small energy saving, then $(\lbrace a_n \alpha \rbrace)_{n \in \NN}$ has Poissonian pair correlations for almost all $\alpha$. In this paper the authors also raised the question if $(\lbrace a_n \alpha \rbrace)_{n \in \NN}$, where $(a_n)$ is an increasing sequence of distinct integers with maximal order of additive energy, can have Poissonian pair correlations for almost all $\alpha$. Jean Bourgain could show that the answer to this question is negative, i.e., he proved:
\begin{theo}[in \cite{not6}]
If $E(A_N) = \Omega(N^3)$, where $A_N$ denotes the first $N$ elements of $(a_n)_{n \in \NN}$, then there exists a subset of $[0,1]$ of positive measure such that for every $\alpha$ from this set the pair correlations of $(\lbrace a_n \alpha \rbrace)_{n \in \NN}$ are not Poissonian.
\end{theo}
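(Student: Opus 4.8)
\emph{Overview and moments.} The plan is to combine a first/second--moment computation for the counting function $F_N(s,\cdot)$ with the additive structure forced by maximal energy, and then a Borel--Cantelli--type step. Put $r_N(d)=\#\{(i,j):1\le i,j\le N,\ a_i-a_j=d\}$, so that $F_N(s,\alpha)=\frac1N\sum_{d\ne0}r_N(d)\,\mathbf 1_{\{\|d\alpha\|\le s/N\}}$ and $\sum_{d\ne0}r_N(d)^2=E(A_N)-N^2$. For $d\ne0$ and $2s<N$ one has $\int_0^1\mathbf 1_{\{\|d\alpha\|\le s/N\}}\,d\alpha=2s/N$, whence $\int_0^1F_N(s,\alpha)\,d\alpha=2s(1-\tfrac1N)$, while keeping in the expansion of $F_N^2$ only the diagonal terms $d=d'$ (every term being nonnegative) gives
$$\int_0^1F_N(s,\alpha)^2\,d\alpha\ \ge\ \frac{2s}{N^3}\bigl(E(A_N)-N^2\bigr).$$
Thus along a subsequence $N_1<N_2<\cdots$ with $E(A_{N_i})\ge cN_i^{3}$ we get $\int_0^1F_{N_i}(s,\alpha)^2\,d\alpha\ge 2cs(1-o(1))$, which for $0<s<c/2$ strictly exceeds $(2s)^2=\bigl(\lim_i\int_0^1F_{N_i}(s,\alpha)\,d\alpha\bigr)^2$; fix such an $s_0$. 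It now suffices to find constants $\eta>2s_0$ and $\delta>0$ with $\lambda\bigl(\{\alpha:F_{N_i}(s_0,\alpha)\ge\eta\}\bigr)\ge\delta$ for infinitely many $i$. Indeed, the sets $G_i=\{F_{N_i}(s_0,\cdot)\ge\eta\}$ then satisfy $\lambda(\limsup_iG_i)\ge\limsup_i\lambda(G_i)\ge\delta$ by the reverse Fatou lemma on $[0,1]$, and each $\alpha\in\limsup_iG_i$ has $F_{N_i}(s_0,\alpha)\ge\eta>2s_0$ infinitely often, so $(\{a_n\alpha\})_n$ is not Poissonian. (Measurability of the Poissonian set is clear, since $s\mapsto F_N(s,\alpha)$ is non-decreasing with continuous limit $2s$, so testing $s\in\QQ_{\ge0}$ suffices.)

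\emph{Structure from maximal energy.} The bound $\lambda(\{F_{N_i}(s_0,\cdot)\ge\eta\})\ge\delta$ does not follow from the two moments alone: a function equal to $2s_0-o(1)$ off a set of measure tending to $0$ and enormous on it is consistent with both. One must use that $E(A_N)=\Omega(N^3)$ forces additive structure. By the Balog--Szemer\'edi--Gowers theorem (with the $O(1)$ parameter $1/c$) there is $B\subseteq A_N$ with $|B|\gtrsim N$ and $|B+B|\lesssim N$; Freiman's theorem then puts $B$ inside a generalised arithmetic progression $P=\{z_1q_1+\cdots+z_rq_r:|z_i|\le L_i\}$ of bounded rank $r$ with $\prod_iL_i\lesssim N$. (A dyadic pigeonhole on $\sum_{d\ne0}r_N(d)^2\gtrsim N^3$ already exhibits a set of ``popular differences'' $d$ with $r_N(d)\asymp D\gtrsim N/\log N$, enough for a cruder variant of the argument.) Since $B\subseteq A_N$ we have $F_N(s_0,\alpha)\ge\frac1N\#\{(b,b')\in B^2:b\ne b',\ \|(b-b')\alpha\|\le s_0/N\}$, and every difference $b-b'$ lies in $P-P$; as $P$ has length $\asymp N$ and each of its differences is realised $\gtrsim N$ times in $B$, it suffices to produce a set of $\alpha$ of measure $\ge\delta$ on which $\|(b-b')\alpha\|\le s_0/N$ holds for a fixed positive proportion of the differences $b-b'$.

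\emph{The resonant set: the main obstacle.} The crux is that this resonant set must have measure bounded below \emph{uniformly in} $N$, even though for each $N$ it is a sparse, $N$-dependent union of short arcs. Consider first the model case where $P$ is an honest arithmetic progression of length $M\asymp N$ and common difference $q_1$: under the measure--preserving substitution $\beta=\{q_1\alpha\}$, the count of close pairs in $B$ becomes the Kronecker sum $\sum_{0<|e|<M}(M-|e|)\,\mathbf 1_{\{\|e\beta\|\le s_0/N\}}$. If $\beta$ lies within $s_0/N^{2}$ of a reduced fraction $p/q$ with $q\asymp N/\eta$, then every $e=\ell q$ with $|\ell q|<M$ has $\|e\beta\|\le\ell q\cdot s_0/N^{2}\le s_0/N$, and summing over such $\ell$ makes the sum $\gtrsim M^{2}/q\gtrsim N\eta$, i.e. $F_N(s_0,\alpha)\gtrsim\eta$. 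The union of Farey arcs $\bigl\{\beta:|\beta-p/q|\le s_0/N^{2},\ \gcd(p,q)=1,\ q\asymp N/\eta\bigr\}$ has, since $\sum_{q\asymp N/\eta}\varphi(q)\asymp(N/\eta)^2$ and distinct arcs are essentially disjoint, measure $\gtrsim(N/\eta)^{2}\cdot s_0/N^{2}=s_0/\eta^{2}$, independent of $N$ — equivalently, by the Gauss--Kuzmin statistics a positive proportion of $\beta$ have a convergent denominator $q_m\asymp N/\eta$ followed by a partial quotient $a_{m+1}\gtrsim\eta^{2}/s_0$. This yields the uniform $\delta\asymp s_0/\eta^{2}$, and feeding it back into the first step finishes the proof. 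The general bounded-rank case is analogous, with the one-dimensional Dirichlet/continued-fraction input replaced by a geometry-of-numbers estimate for how the one-parameter family $\alpha\mapsto(q_1\alpha,\dots,q_r\alpha)$ resonates with the lattice spanned by $(q_1,\dots,q_r)$ at scale $N$; the bookkeeping of the side lengths $L_i$ and the passage from ``$P-P$ can be made small'' to ``a fixed proportion of the differences of $B$ is made small'' is the step I expect to be the main obstacle.
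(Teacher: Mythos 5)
\emph{A remark on scope first:} the statement you were asked to prove is the paper's Theorem A, which is quoted from Bourgain's appendix to \cite{not6} and is \emph{not} proved in this paper (Theorem 1 here only recovers the rank-one special case, Theorem B). So your attempt has to be judged on its own terms, and on those terms it is a good strategic outline with two genuine gaps. What is correct and well done: the moment identities, the choice $s_0<c/2$, the reduction to exhibiting $\eta>2s_0$ and $\delta>0$ with $\lambda(\lbrace F_{N_i}(s_0,\cdot)\ge\eta\rbrace)\ge\delta$ infinitely often, the reverse-Fatou conclusion, and -- importantly -- your explicit recognition that the two moments alone cannot finish the proof.

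The gaps are in the structural half. (i) Even in the rank-one model case, the claim that ``each of its differences is realised $\gtrsim N$ times in $B$'' is false: $B$ only has positive density in the progression $P$, so an individual difference $\ell q'q$ may be realised zero times (take $B$ the even-index sub-progression and $q'$ odd). What is true, and what the argument needs, is the averaged bound $\sum_{\ell}r_B(\ell q'q)\ge |B|^2/q'$, obtained by Cauchy--Schwarz over the residue classes of $P$ modulo $q'$; combined with your observation that on the Farey arcs only multiples of $q'$ resonate, this does close the rank-one case after rescaling $s_0$. (ii) The case of rank $r\ge 2$, which Freiman's theorem forces on you and which you defer as ``analogous,'' is not analogous and is the heart of the matter: the naive resonant set $\lbrace\alpha:\|q_i\alpha\|\le s_0/(rNL_i),\ i=1,\dots,r\rbrace$ has measure of order $(s_0/rN)^r/\prod_iL_i\asymp N^{-r-1}$, which tends to $0$, so no direct analogue of the Farey-arc computation gives a measure bounded below uniformly in $N$; one would need a genuinely multidimensional substitute (a resonance of the lattice generated by $q_1,\dots,q_r$ with a single well-chosen modulus, together with a positive-proportion realisation statement for the differences of $B$ rather than of $P-P$), and nothing in your sketch supplies it. This is exactly where a new idea is required; it is also consistent with the fact that the literature cited in the paper (\cite{not14,not15,not31}) treats higher-rank progressions as a separate and harder problem, and that Bourgain's own argument in \cite{not6} proceeds differently, working directly with a positive proportion of popular differences and overlap estimates for the events $\|d\alpha\|\le s/N$ rather than through Freiman's theorem.
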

Recently, the result of Bourgain has been further extended, see \cite{not9, not14, not15, not31}. The result given in \cite{not15} is an easy consequence of our Theorem 1 stated below and will be shown in Section 2. Further, see \cite{not33, not32} for (negative) results and discussions concerning a Khintchine type criterion which fully characterizes the metric pair correlation property in terms of the additive energy. \\

Due to a result by Grepstad and Larcher \cite{not8} (see also \cite{not7, not12}), we know that a sequence which satisfies that (\ref{eq:pc1}) tends to $2s$, for every $s \geq 0$, as $N \to \infty$, is also uniformly distributed in $[0,1)$, i.e., it satisfies 
\begin{equation*}
\lim_{N \to \infty} \frac{1}{N} \# \lbrace 1 \leq n \leq N: x_n \in [a,b) \rbrace = b-a 
\end{equation*}
for all $0 \leq a < b \leq 1$. Note that the other direction is not necessarily correct. For instance the Kronecker sequence $(\lbrace n\alpha \rbrace)_{n \in \NN}$ is uniformly distributed modulo $1$ for irrational $\alpha$, but does not have Poissonian pair correlations for any real $\alpha$; a fact that easily follows from continued fractions arguments. In earlier papers (see e.g., \cite{not6, not3}) this fact was argued to be an immediate consequence of the Three Gap Theorem \cite{not4}. The Three Gap Theorem, roughly speaking, states that the Kronecker sequence always has at most three distinct distances between nearest sequence elements. Nonetheless -- at least for us -- it is not immediately clear that we can deduce from this fact that $(\lbrace n\alpha \rbrace)_{n \in \NN}$ is not Poissonian for any $\alpha$. Therefore, we will prove the following very general result concerning the link between Poissonian pair correlations and a certain gap structure of a sequence in the unit interval. In the next section, we will present some applications of this Theorem 1. 
\begin{theorem}
Let $(x_n)_{n \in \NN}$ be a sequence in $[0,1)$ with the following property: There is an $s \in \NN$, positive real numbers $K$ and $\gamma$, and infinitely many $N$ such that the point set $x_1, \ldots, x_N$ has a subset with $M \geq \gamma N$ elements, denoted by $x_{j_1}, \ldots, x_{j_M}$, which are contained in a set of points with cardinality at most $KN$ having at most $s$ different distances between neighbouring sequence elements, so-called gaps. Then, $(x_n)_{n \in \NN}$ does not have Poissonian pair correlations.
\end{theorem}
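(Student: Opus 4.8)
The plan is to argue by contradiction: suppose $(x_n)_{n\in\NN}$ has Poissonian pair correlations, so that for every fixed parameter $\sigma\ge 0$ one has $F_N(\sigma)=\frac1N\#\{1\le l\ne m\le N:\|x_l-x_m\|\le\sigma/N\}\to 2\sigma$ (I rename the pair–correlation parameter to $\sigma$ to avoid a clash with the integer $s$ in the hypothesis). Work on the circle $\RR/\ZZ$. For each of the infinitely many admissible $N$ write $Q=\{p_1<\dots<p_L\}$ for the ambient point set, so $L\le KN$, $S=\{x_{j_1},\dots,x_{j_M}\}\subseteq Q$ with $M\ge\gamma N$, and the $L$ gaps of $Q$ (arcs between cyclically consecutive points) take at most $s$ distinct values $g_1,\dots,g_r$ with $r\le s$. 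The combinatorial heart of the argument is the following claim: \emph{there are constants $C=C(K,\gamma,s)>0$ and $\beta=\beta(K,\gamma,s)>0$, independent of $N$, and a number $v^\ast=v^\ast(N)\le C/N$, such that at least $\beta N$ ordered pairs of points of $S$ are at distance exactly $v^\ast$}; equivalently the nondecreasing step function $\sigma\mapsto G_N(\sigma):=\#\{(l,m):l,m\in\{j_1,\dots,j_M\},\,l\ne m,\,\|x_l-x_m\|\le\sigma/N\}$ has a jump of height $\ge\beta N$ at the point $\sigma^\ast:=Nv^\ast\in(0,C]$.

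To prove the claim, consider the $M$ gaps of $S$: each one is a union of a block of consecutive $Q$–gaps, and the block sizes sum to $L$. Call an $S$–gap \emph{short} if its block has size at most $j_0:=\lceil 2K/\gamma\rceil$. Since $2L\le 2KN\le j_0 M$, the number of long $S$–gaps is at most $L/(j_0+1)\le M/2$, so at least $M/2\ge\gamma N/2$ of the $S$–gaps are short. The length of a short $S$–gap is a sum of at most $j_0$ of the values $g_1,\dots,g_r$ (with repetition), hence lies in a fixed set of at most $R:=\sum_{j=1}^{j_0}\binom{s+j-1}{j}$ real numbers. By pigeonhole some value $v^\ast$ is the length of at least $\gamma N/(2R)$ of the short $S$–gaps; since distinct $S$–gaps are disjoint arcs, their total length $v^\ast\cdot\gamma N/(2R)$ is at most $1$, forcing $v^\ast\le 2R/(\gamma N)=:C/N$. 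For $N$ large, $v^\ast<1/2$, so each such $S$–gap joins two points of $S$ at circle distance exactly $v^\ast$ and contributes two ordered pairs; thus $G_N$ jumps by at least $\gamma N/R=:\beta N$ at $\sigma^\ast=Nv^\ast\in(0,C]$, proving the claim. (If $S$ contains repeated points the argument is easier: then $F_N(0)$ is bounded below along the admissible $N$, contradicting $F_N(0)\to 0$.)

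Now apply the claim along the infinitely many admissible $N$ and, by Bolzano–Weierstrass, pass to a subsequence along which $\sigma^\ast=\sigma^\ast(N)\to\sigma_0\in[0,C]$. If $\sigma_0>0$ fix $\eta\in\bigl(0,\min(\sigma_0,\beta/5)\bigr)$; for all large $N$ in the subsequence $\sigma^\ast\in(\sigma_0-\eta,\sigma_0+\eta)$, so the jump of $G_N$ lies inside this window and, using that $G_N\le NF_N$ and that adding the remaining pairs only increases the count,
\[
N\bigl(F_N(\sigma_0+\eta)-F_N(\sigma_0-\eta)\bigr)\ \ge\ G_N(\sigma_0+\eta)-G_N(\sigma_0-\eta)\ \ge\ \beta N .
\]
Hence $F_N(\sigma_0+\eta)-F_N(\sigma_0-\eta)\ge\beta$ for infinitely many $N$, whereas the Poissonian assumption forces this difference to tend to $4\eta<\beta$ — a contradiction. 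If instead $\sigma_0=0$, then for any fixed $\eta\in(0,\beta/3)$ one gets $NF_N(\eta)\ge G_N(\eta)\ge\beta N$ for all large $N$ in the subsequence, i.e. $F_N(\eta)\ge\beta$, contradicting $F_N(\eta)\to 2\eta<\beta$. In either case the contradiction proves the theorem.

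The step I expect to be the crux is the combinatorial claim — in particular the point that the ``popular'' value $v^\ast$ is automatically of order $1/N$: one needs simultaneously that the short $S$–gaps take only boundedly many values (this is exactly where the hypothesis ``at most $s$ gaps'' combines with $M\gtrsim N\gtrsim L$, via the bound $j_0=\lceil 2K/\gamma\rceil$ on typical block lengths) and that a length shared by $\gtrsim N$ pairwise disjoint arcs must be $O(1/N)$. Once a jump of linear height at a bounded scale $\sigma^\ast$ has been produced, its incompatibility with the \emph{exactly} linear limit $2\sigma$ is routine; the only subtlety there is that $\sigma^\ast$ depends on $N$, which is why one first stabilizes it by passing to a subsequence.
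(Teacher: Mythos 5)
Your proof is correct, and it reaches the conclusion by a genuinely more streamlined route than the paper. The paper first proves a Proposition for the case where the subset itself has at most $s$ gaps (splitting into a ``jump'' argument for gaps of scale $\Theta(1/N)$ occurring with positive density, and a separate Cauchy--Schwarz computation showing $F_N(1/\gamma)\to\infty$ when almost all gaps are $o(1/N)$), and then reduces the general statement to that Proposition via a pigeonhole over composite gaps $K_1d_1+\dots+K_sd_s$ with bounded coefficient sum, followed by a five-case analysis according to whether each $N_id_j^{(i)}$ tends to $0$, stays bounded, or tends to $\infty$. Your argument fuses these steps: the same pigeonhole idea (bounding the typical block length by $j_0=\lceil 2K/\gamma\rceil$ and counting multisets) directly produces a \emph{single} popular gap value $v^\ast$ attained by $\Omega(N)$ pairwise disjoint arcs, whence $v^\ast=O(1/N)$ automatically; after stabilizing $\sigma^\ast=Nv^\ast$ by Bolzano--Weierstrass you need only two cases, and the case $\sigma_0=0$ is dispatched by the trivial bound $F_N(\eta)\ge\beta>2\eta$, entirely bypassing the paper's Cauchy--Schwarz part and its $w_1,w_2$ bookkeeping. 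What the concentration at one value buys you is exactly this simplification; what the paper's finer case analysis buys is more explicit information about which gap scales are responsible for the failure. Two cosmetic points you may wish to tidy: working on the circle adds the wrap-around arc as a possible extra gap length of the ambient set, so $s$ should be replaced by $s+1$ (harmless for the constants); and the aside about repeated points is not quite right as stated -- $o(N)$ repetitions do not force $F_N(0)\not\to 0$ -- but in that regime one simply discards the $o(N)$ repeated elements and runs the main argument on the remaining $M-o(N)\ge\tfrac{\gamma}{2}N$ distinct points.
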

Poissonian pair correlation is a typical property of a sequence. Random sequences, i.e., almost all sequences, have the Poissonian pair correlation property.
Nevertheless, it seems to be extremely difficult to give explicit examples of sequences with Poissonian pair correlations. We note that $(\lbrace \sqrt{n} \rbrace)_{n \in \NN}$ has Poissonian pair correlations, \cite{not27} (see \cite{not28} for another explicit construction). Apart from that -- to our best knowledge -- no other explicit examples are known. Especially, until now we do not know any single explicit construction of a real number $\alpha$ such that the sequence of the form $(\lbrace a_n \alpha \rbrace)_{n \in \NN}$ has Poissonian pair correlations.  \\

We recall that the sequence $(\lbrace b^n \alpha \rbrace)_{n \in \NN}$, for an integer $b \geq 2$, has the Poissonian property for almost all $\alpha$. Moreover we know that the sequence  $(\lbrace b^n \alpha \rbrace)_{n \in \NN}$ is uniformly distributed modulo $1$ if and only if $\alpha$ is normal in base $b$, see e.g., \cite{not11}. If we want to investigate, whether the distribution of the pair correlations for some explicit given sequence is Poissonian, the sequence has to be uniformly distributed modulo $1$. Therefore, if we study the distribution of the spacings between the sequence elements of $(\lbrace b^n \alpha \rbrace)_{n \in \NN}$, the only reasonable choice for $\alpha$ is a $b$-normal number. In $\cite{not22}$, the sequence $(\lbrace 2^n \alpha \rbrace)_{n \in \NN}$ was studied for the Champernowne constant $\alpha$ and it was shown that it is not Poissonian. In this note we will choose two other special instances, which were suggested by Yann Bugeaud as potential candidates in personal communication. First, we will consider so-called infinite de Bruijn words.
\begin{defi}
A (non cyclic) de Bruijn word of order $m$ over an alphabet $A$ is a word of length $|A|^m+m-1$ such that every word of length $m$ occurs in it exactly once. 
\end{defi}
\begin{defi}
An infinite de Bruijn word $w=a_1a_2 \ldots$ in an alphabet of at least three symbols is an infinite word such that, for every $m$, $a_1 \ldots a_{|A|^m+m-1}$ is a de Bruijn word of order $m$. In case the alphabet has two symbols, an infinite de Bruijn word $w= a_1 a_2 \ldots $ is such that, for every odd $m$, $a_1 \ldots a_{|A|^m+m-1}$ is a de Bruijn word of order $m$. 
\end{defi}
It is known that infinite de Bruijn words are normal, see, e.g., \cite{not19}.
We will prove the following theorem. 
\begin{theorem}
The sequence $( \lbrace b^n \alpha \rbrace)_{n \in \NN}$, $b \geq 2$, where $\alpha=0.a_1a_2 \ldots $ and $a_1a_2 \ldots$ is an infinite de Bruijn word, does not have Poissonian pair correlations. 
\end{theorem}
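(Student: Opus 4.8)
We argue directly; Theorem~1 is not applicable here, since the points $\{b^n\alpha\}$ have infinite $b$-ary expansions and hence never lie in a finite set with boundedly many gaps. Write $\alpha=0.a_1a_2\cdots$ in base $b$, so that $x_n=\{b^n\alpha\}=0.a_{n+1}a_{n+2}\cdots$; thus $x_n=(k_n+\tau_n)/b^m$ with $k_n=(a_{n+1}\cdots a_{n+m})_b$ and $\tau_n=0.a_{n+m+1}a_{n+m+2}\cdots$, and the length-$m$ prefix of $x_n$ is the window $a_{n+1}\cdots a_{n+m}$ of the de Bruijn word. Fix an admissible order $m$ (every $m$ if $b\ge3$, every odd $m$ if $b=2$) and put $N=N_m:=b^m-1$. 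Since the $b^m$ windows of length $m$ at positions $1,\dots,b^m$ run through all words of length $m$ exactly once, $n\mapsto k_n$ is a bijection of $\{1,\dots,N_m\}$ onto $\{0,\dots,b^m-1\}\setminus\{k_0\}$, where $k_0=(a_1\cdots a_m)_b$; for $u$ in this range put $\tau(u):=\tau_n$ where $k_n=u$ (and extend $\tau$ arbitrarily to $k_0$). The same property shows that for each fixed $t\le m$ and each word $w$ of length $t$ exactly $b^{m-t}$ of the windows at positions $1,\dots,b^m$ begin with $w$; hence the numbers $\tau(u)$ are asymptotically equidistributed in $[0,1)$ (this also follows from the normality of infinite de Bruijn words), in particular
\[
\#\bigl\{u:\tfrac14\le\tau(u)\le\tfrac34\bigr\}=\bigl(\tfrac12+o(1)\bigr)\,b^m .
\]

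Next we determine which pairs are counted in $F_{N_m}(1+\beta)$ for $\beta\in(0,1)$. For distinct $i,j$ set $u=k_i$, $v=k_j$ and write $u-v=c+eb^m$ with $c$ the representative nearest $0$; then $x_i-x_j=e+(c+\tau(u)-\tau(v))/b^m$, and as $|\tau(u)-\tau(v)|<1$ the condition $\|x_i-x_j\|\le(1+\beta)/N_m$ can hold only for $c\in\{\pm1,\pm2\}$ (for $m$ large, apart from a set of $\beta$ of measure $O(b^{-m})$ near $1$), in which case $\|x_i-x_j\|=b^{-m}|c+\tau(u)-\tau(v)|$, so the pair is counted iff $|c+\tau(u)-\tau(v)|\le 1+\beta'$ with $\beta'=\beta+(1+\beta)/(b^m-1)$. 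Since $c+\tau(u)-\tau(v)\in(c-1,c+1)$, this means (indices modulo $b^m$): $v=u-1$ and $\tau(u)-\tau(u-1)\le\beta'$; or $v=u+1$ and $\tau(u+1)-\tau(u)\le\beta'$; or $v=u-2$ and $\tau(u-2)-\tau(u)\ge1-\beta'$; or $v=u+2$ and $\tau(u)-\tau(u+2)\ge1-\beta'$. The first two families have equal cardinalities after reindexing, as do the last two; counting and dividing by $N_m$ gives, uniformly for $\beta$ in compact subsets of $(0,1)$,
\[
F_{N_m}(1+\beta)=2\widehat P_m(\beta')+2\widehat Q_m(\beta')+o(1),\qquad
\widehat P_m(\beta')=\tfrac1{b^m}\#\{u:\tau(u)-\tau(u-1)\le\beta'\},\ \
\widehat Q_m(\beta')=\tfrac1{b^m}\#\{u:\tau(u-2)-\tau(u)\ge1-\beta'\},
\]
and likewise with $\beta'$ replaced by $\beta$ after integration (below), since $\widehat P_m,\widehat Q_m$ are monotone and $\beta'-\beta=O(b^{-m})$.

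Integrating over $\beta\in(0,1)$ and using the elementary identities $\int_0^1\mathbf{1}[\delta\le\beta]\,d\beta=1-\delta^{+}$ and $\int_0^1\mathbf{1}[\eta\ge1-\beta]\,d\beta=\eta^{+}$ for $\delta,\eta\in(-1,1)$, we obtain
\[
\int_0^1 F_{N_m}(1+\beta)\,d\beta=2-\frac2{b^m}\sum_u\bigl(\tau(u)-\tau(u-1)\bigr)^{+}+\frac2{b^m}\sum_u\bigl(\tau(u-2)-\tau(u)\bigr)^{+}+o(1).
\]
The cyclic telescoping identities $\sum_u(\tau(u)-\tau(u-1))=\sum_u(\tau(u-2)-\tau(u))=0$ turn each positive‑part sum into half of the corresponding sum of absolute values, so with $g_m(d):=\tfrac1{b^m}\sum_u|\tau(u+d)-\tau(u)|$ this becomes
\[
\int_0^1 F_{N_m}(1+\beta)\,d\beta=2+g_m(2)-g_m(1)+o(1).
\]
Now $g_m(1)\ge0$, while for each of the $\bigl(\tfrac12+o(1)\bigr)b^m$ indices $u$ with $\tau(u)\in[\tfrac14,\tfrac34]$ one has $|\tau(u+2)-\tau(u)|\le\tfrac34$, whence $g_m(2)\le\tfrac12\cdot\tfrac34+\tfrac12\cdot1+o(1)=\tfrac78+o(1)$; therefore $\limsup_m\int_0^1 F_{N_m}(1+\beta)\,d\beta\le\tfrac{23}{8}<3$. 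On the other hand $0\le F_{N_m}(1+\beta)\le F_{N_m}(2)$, which is bounded uniformly in $m$ (for each $i$ only boundedly many $j$ have $k_j$ within cyclic distance $3$ of $k_i$); so if $(x_n)$ had Poissonian pair correlations, then $F_{N_m}(1+\beta)\to2(1+\beta)$ for every $\beta$, and bounded convergence would force $\int_0^1 F_{N_m}(1+\beta)\,d\beta\to\int_0^1 2(1+\beta)\,d\beta=3$, contradicting the previous bound. Hence $(x_n)$ does not have Poissonian pair correlations.

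The main technical point is the second paragraph — pinning down exactly which pairs contribute to $F_{N_m}(1+\beta)$ and reading the count off the consecutive tail‑differences $\tau(u)-\tau(u\pm1)$ and $\tau(u)-\tau(u\pm2)$ — together with the observation that one must integrate over $\beta$: at a single value of $\beta$ the count depends on the a priori uncontrolled excess of ``ascents'' over ``descents'' of the cyclic sequence $u\mapsto\tau(u)$, whereas after integration only the quantities $\sum_u|\tau(u+d)-\tau(u)|$ survive, and these are kept away from the Poissonian value purely by the equidistribution of the tails, itself immediate from the de Bruijn property. (For $b=2$ one works with odd $m$ throughout, which still leaves infinitely many admissible $N_m$.)
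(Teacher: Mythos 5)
Your proof is correct, but it takes a genuinely different route from the paper. The paper fixes $s=2$ and $N=b^m$ (written out for $b=3$) and bounds the number of pairs with $\|x_l-x_k\|\le 2/N$ by a direct combinatorial enumeration of the adjacent length-$m$ prefixes (consecutive base-$b$ integers, with and without carries), obtaining $\limsup F_N(2)\le 8/3<4$; the tails of the expansions enter only through crude upper bounds. You instead average the pair correlation function over $s=1+\beta$, $\beta\in(0,1)$: after identifying the contributing pairs as those whose prefixes differ by $c\in\{\pm1,\pm2\}$ modulo $b^m$, the integration over $\beta$ converts the counts into the cyclic sums $\sum_u(\tau(u)-\tau(u-1))^{+}$ and $\sum_u(\tau(u-2)-\tau(u))^{+}$, the telescoping identity turns these into the absolute-difference averages $g_m(1),g_m(2)$, and the equidistribution of the tails (immediate from the de Bruijn property, or from normality) forces $g_m(2)\le 7/8+o(1)$, hence $\int_0^1F_{N_m}(1+\beta)\,d\beta\le 23/8+o(1)<3$. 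Your approach buys two things: it treats all bases $b\ge2$ uniformly (with the odd-$m$ restriction for $b=2$), and it handles the delicate $|c|=2$ pairs --- whose individual contribution at a fixed $s$ depends on the a priori uncontrolled sign pattern of the tail differences, including the carry configurations --- in a way that is insensitive to that sign information; the paper's enumeration only tracks the carry-free $|c|=2$ pairs and bounds them by their total number. The price is a noticeably longer argument and the extra step of passing from pointwise to integrated pair correlations via bounded convergence, which you justify correctly with the uniform bound $F_{N_m}(2)\le 6$. Two minor presentational points: your stated justification for the equidistribution of the $\tau(u)$ refers to the length-$t$ prefixes of the windows at positions $1,\dots,b^m$, whereas the tails live at positions $m+2,\dots,b^m+m$; the shift costs only $O(m)=o(b^{m-t})$ exceptional positions (or one simply invokes normality, as you note), so the claim stands, but the offset should be acknowledged. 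Likewise the opening remark that Theorem~1 ``is not applicable'' is harmless but not quite the right reason --- Theorem~1 only requires a dense subset to sit inside an auxiliary point set with few gaps, not that the $x_n$ themselves have finite expansions.
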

For further properties of de Bruijn words, we refer the reader to \cite{not21, not20}. \\

As a second instance, we study the Stoneham number 
\begin{equation*}
\alpha_{2,3} := \sum_{m=1}^{\infty} \frac{1}{3^m 2^{3^m}},
\end{equation*}
which is known to be $2$-normal, see, e.g., \cite{not17}.
Again, we obtain a negative result. 
\begin{theorem}
The sequence $(\lbrace 2^n \alpha_{2,3} \rbrace)_{n \in \NN}$ does not have Poissonian pair correlations.  
\end{theorem}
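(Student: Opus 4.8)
The sequence $x_n=\{2^n\alpha_{2,3}\}$ is normal, hence uniformly distributed, and carries no bounded‑gap structure, so Theorem 1 does not apply and I would argue directly, exploiting the self‑similarity of the binary expansion of $\alpha_{2,3}$ at the scales $n\approx 3^m$: between two consecutive ``active'' scales the dyadic orbit is \emph{exactly} periodic, and this forces a positive proportion of the first $N$ points to collapse onto a single orbit, which is incompatible with $F_N(s)\to 2s$ for small $s$.

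The first step is to work with the rational truncations
\[
\beta_m:=\sum_{k=1}^{m}\frac{1}{3^k 2^{3^k}}=\frac{N_m}{3^m 2^{3^m}},\qquad N_m:=\sum_{k=1}^{m}3^{m-k}2^{3^m-3^k}\in\NN ,
\]
and to record two facts. First, since $2$ is a primitive root modulo every power of $3$, $L_m:=\operatorname{ord}_{3^m}(2)=\varphi(3^m)=2\cdot 3^{m-1}$, which is \emph{exactly} one third of the gap $3^{m+1}-3^m=2\cdot 3^m$; moreover for $n>3^m$ one has $2^n\beta_m=2^{\,n-3^m}N_m/3^m$, so if $n\equiv n'\pmod{L_m}$ with $n,n'>3^m$ then $2^n\beta_m-2^{n'}\beta_m=2^{\,n'-3^m}N_m\,(2^{\,n-n'}-1)/3^m\in\ZZ$ (because $3^m\mid 2^{L_m}-1$). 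Hence $n\mapsto\{2^n\beta_m\}$ is periodic with period $L_m$ on $n>3^m$ — no carry analysis needed, because $\beta_m$ is a genuine rational. Second, the tail $R_m:=\alpha_{2,3}-\beta_m=\sum_{k>m}3^{-k}2^{-3^k}$ satisfies $0<R_m<2^{-3^{m+1}+1}$.

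Then I would fix $s>0$, take $N=N_m:=3^{m+1}$ with $m$ large, and consider the two consecutive blocks of indices $I:=\{3^m+1,\dots,3^m+L_m\}$ and $J:=\{3^m+L_m+1,\dots,3^m+2L_m\}$, both contained in $\{1,\dots,N\}$ since $3^m+2L_m=3^{m+1}-L_m<N$. For $0\le r<L_m$ set $i_r:=3^m+1+r\in I$ and $j_r:=3^m+L_m+1+r\in J$; these give $L_m$ pairwise disjoint pairs with $i_r\equiv j_r\pmod{L_m}$, $i_r,j_r>3^m$, and $j_r\le 3^{m+1}-L_m$. Cancelling the $\beta_m$‑part via the periodicity above and estimating the remainder with the tail bound,
\[
\|x_{i_r}-x_{j_r}\|=\|2^{i_r}R_m-2^{j_r}R_m\|\le 2^{i_r}R_m+2^{j_r}R_m\le 2^{-L_m+2},
\]
which is $\le s/N$ as soon as $2^{\,2\cdot 3^{m-1}-2}\ge 3^{m+1}/s$, i.e. for all $m\ge m_0(s)$. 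The $2L_m$ ordered pairs $(i_r,j_r),(j_r,i_r)$ therefore give $F_N(s)\ge 2L_m/N=4/9$. Since this holds for every $s>0$ along $N=3^{m+1}\to\infty$, picking any $s$ with $0<2s<4/9$ (e.g. $s=1/5$) shows $F_N(s)\not\to 2s$, so $(\{2^n\alpha_{2,3}\})_{n\in\NN}$ is not Poissonian.

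The only genuinely delicate point is the control passing from $\beta_m$ to $\alpha_{2,3}$: one must check that the truncation's dyadic orbit is exactly $L_m$‑periodic and that the indices $i_r,j_r$ stay far enough below the next active scale $3^{m+1}$ so that $2^{j_r}R_m$ is doubly‑exponentially small compared with $s/N$ — this is exactly why $L_m=2\cdot 3^{m-1}$ being precisely one third of $3^{m+1}-3^m$ matters, as it makes the block $J$ fit under $3^{m+1}$. The tail estimate, the arithmetic fact that $2$ is a primitive root mod $3^m$, and the final counting are all routine.
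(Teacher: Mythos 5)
Your proof is correct, and it takes a genuinely different route from the paper's. The paper works with the Bailey--Misiurewicz ``hot spot'' description $z_n$ of the head of $2^n\alpha_{2,3}$, picks $N=2^w$ and $s=1$, and derives an \emph{upper} bound: it counts the close pairs coming from triples of indices sharing the same $z$-value and then argues that (most) pairs with distinct $z$-values are separated by more than $1/N$, concluding $F_N(1)<2$. That upper-bound strategy forces the paper to control \emph{all} pairs, which is where its delicate carry analysis and the repeated ``most of them'' caveats enter. You instead pick $N=3^{m+1}$ and prove a \emph{lower} bound that is uniform in $s$: the truncation $\beta_m$ has a purely periodic dyadic orbit of period $L_m=\operatorname{ord}_{3^m}(2)=2\cdot 3^{m-1}$ past the index $3^m$, the tail $R_m$ is doubly exponentially small, and hence the two blocks $I,J$ of length $L_m$ produce $2L_m$ ordered pairs at distance $\le 2^{-L_m+2}\le s/N$, giving $F_N(s)\ge 2L_m/N=4/9$ for every $s>0$ and large $m$; choosing $s<2/9$ kills Poissonian behaviour. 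The underlying arithmetic (the order of $2$ modulo powers of $3$ and the sparse binary expansion of the Stoneham number) is the same in both arguments, but your lower-bound version only requires \emph{exhibiting} enough near-coincident pairs rather than separating all the remaining ones, so it is shorter, fully rigorous as written, and avoids the borderline cases the paper has to dismiss as negligible. All the individual steps check out: $N_m\in\NN$ with $3\nmid N_m$, the periodicity claim needs only $n,n'>3^m$ and $3^m\mid 2^{L_m}-1$, the blocks satisfy $3^m+2L_m=7\cdot 3^{m-1}<3^{m+1}$, and $2^{j_r}R_m\le 2^{-L_m+1}$ beats $s/N$ for $m\ge m_0(s)$ since the left side decays doubly exponentially. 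The only cosmetic blemish is the clash of notation between the numerator $N_m$ of $\beta_m$ and the sample size $N_m=3^{m+1}$; rename one of them.
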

\section{Applications of Theorem 1}
Theorem $1$ immediately allows us to deduce that Kronecker sequences, due to the Three Gap Theorem, the classical van der Corput sequence and in more general, sequences which have a not too small intersection with these sequences, do not have Poissonian pair correlations. In the following, we give three further examples of classes of sequences for which Theorem 1 can be applied. \\ \\
\textbf{Example 1: $LS$-sequences}  \\ \\
Our presentation follows the steps of \cite{not26} (see also, \cite{not24, not25}).
\begin{defi}
Let $\rho$ denote a non-trivial partition of $[0,1)$. Then the $\rho$-refinement of a partition $\pi$ of $[0,1)$, denoted by $\rho \pi$, is defined by subdividing all intervals of maximal length positively homothetically to $\rho$.
\end{defi} 
This partitioning procedure results in a sequence of partitions abbreviated by $(\rho^n \pi)_{n \in \NN}$. Now, we are in the position to define so-called $LS$-sequences of partitions and $LS$-sequences of points, introduced in \cite{not23}.
\begin{defi}
Let $L \in \NN$, $S \in \NN_{0}$, $L+S \geq2$ and $\beta$ be the solution of $L\beta + S\beta^2 =1$. An $LS$-sequence of partitions $(\rho_{L,S}^n \pi)_{n \in \NN}$ is the successive $\rho$-refinement of the trivial partition $\pi = \lbrace [0,1) \rbrace$ where $\rho_{L,S}$ consists of $L+S$ intervals such that the first $L$ intervals have length $\beta$ and the successive $S$ intervals have length $\beta^2$.
\end{defi}  
\begin{defi}
Given an $LS$-sequence of partitions $(\rho_{L,S}^n \pi)_{n \in \NN}$, the corresponding $LS$-sequence of points $(\xi_{L,S}^n)_{n \in \NN}$ is defined as follows: let $\Lambda_{L,S}^1$ be the first $t_1$ left endpoints of the partition $\rho_{L,S} \pi$ ordered by magnitude. Given $\Lambda_{L,S}^n = \lbrace \xi_{L,S}^1, \ldots, \xi_{L,S}^{t_n}  \rbrace$ an ordering of $\Lambda_{L,S}^{n+1}$ is inductively defined as 
\begin{align*}
\Lambda_{L,S}^{n+1} = \lbrace &  \xi_{L,S}^1, \ldots, \xi_{L,S}^{t_n},  \\
& \psi_{1,0}^{n+1}(\xi_{L,S}^1), \ldots, \psi_{1,0}^{n+1}(\xi_{L,S}^{l_n}), \ldots, \psi_{L,0}^{n+1}(\xi_{L,S}^1), \ldots, \psi_{L,0}^{n+1}(\xi_{L,S}^{l_n}), \\
&  \psi_{L,1}^{n+1}(\xi_{L,S}^1), \ldots, \psi_{L,1}^{n+1}(\xi_{L,S}^{l_n}), \ldots, \psi_{L,S-1}^{n+1}(\xi_{L,S}^1), \ldots, \psi_{L,S-1}^{n+1}(\xi_{L,S}^{l_n}) \rbrace, 
\end{align*}
where $\psi_{i,j}^n = x + i \beta^n + j \beta^{n+1}$, \quad $x \in \RR$. 
\end{defi}
Due to the definition of a $LS$-sequence of partitions, we see that there are only two distinct gap lengths which are of the form $\beta^n$ and $\beta^{n+1}$, $n \in \NN$. Therefore, due to Theorem 1, we obtain the following corollaries.
\begin{coro} 
$LS$-sequences of points do not have Poissonian pair correlations.
\end{coro}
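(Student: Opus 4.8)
The plan is to apply Theorem~1 with the number of distinct gaps $s=2$ and with the trivial constants $K=\gamma=1$.

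First I would fix $n\in\NN$ and put $N:=t_n$, the cardinality of $\Lambda_{L,S}^n$. By the very construction of the $LS$-sequence of points, its first $N$ terms $\xi_{L,S}^1,\dots,\xi_{L,S}^{t_n}$ are exactly the elements of $\Lambda_{L,S}^n$, that is, the set of left endpoints of the intervals of the partition $\rho_{L,S}^n\pi$. Since each successive $\rho$-refinement splits every interval of maximal length into $L+S\ge 2$ pieces, we have $t_n\to\infty$, so this produces infinitely many admissible values of $N$.

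Next I would use the structure of $\rho_{L,S}^n\pi$: by the definition of an $LS$-sequence of partitions together with an easy induction on $n$, every interval of $\rho_{L,S}^n\pi$ has length either $\beta^n$ or $\beta^{n+1}$. Ordering the left endpoints as $0=c_0<c_1<\dots<c_{t_n-1}$, each gap $c_{i+1}-c_i$ (and the remaining gap $1-c_{t_n-1}$) equals the length of the corresponding interval of $\rho_{L,S}^n\pi$, hence takes at most the two values $\beta^n$ and $\beta^{n+1}$. Thus the point set $\{x_1,\dots,x_N\}=\Lambda_{L,S}^n$ has at most $s=2$ different gaps.

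Finally, the hypothesis of Theorem~1 is met by taking the subset $x_{j_1},\dots,x_{j_M}$ to be the whole set $x_1,\dots,x_N$ (so $M=N\ge\gamma N$ with $\gamma=1$), viewed as contained in the set $x_1,\dots,x_N$ itself, which has cardinality $N\le KN$ with $K=1$ and at most $s=2$ distinct gaps, for the infinitely many $N=t_n$. Theorem~1 then immediately yields that $(\xi_{L,S}^n)_{n\in\NN}$ does not have Poissonian pair correlations. The only step requiring a small amount of care is the transfer of the two-length structure of the partition $\rho_{L,S}^n\pi$ to the ordered point set $\Lambda_{L,S}^n$; this is where I would spend most of the effort, although it reduces to the elementary observation that consecutive left endpoints of a partition of $[0,1)$ differ by exactly one interval length.
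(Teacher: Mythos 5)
Your proposal is correct and follows exactly the route the paper intends: the first $t_n$ points of the $LS$-sequence are the left endpoints of $\rho_{L,S}^n\pi$, whose intervals have only the two lengths $\beta^n$ and $\beta^{n+1}$, so Theorem~1 applies with $s=2$ and $K=\gamma=1$. The paper states this in a single sentence before the corollary; you have merely (and correctly) filled in the induction on the partition lengths and the identification of gaps with interval lengths.
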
 
\begin{coro}
Let $(x_n)_{n \in \NN}$ be a sequence in $[0,1)$ with the following property: There is a constant $\kappa>0$, a sequence $N_1 < N_2, \ldots, $ of positive integers and for each $N_i$, $i \geq 1$, a $LS$-sequence $(y_n^{(i)})_{n \in \NN}$ such that
\begin{equation*}
| \lbrace x_1, \ldots, x_{N_i} \rbrace \cap \lbrace y_1^{(i)}, \ldots, y_{N_i}^{(i)} \rbrace  | \geq \kappa N_i,
\end{equation*}
then $(x_n)_{n \in \NN}$ does not have Poissonian pair correlations.  
\end{coro}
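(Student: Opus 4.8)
The plan is to deduce Corollary 2 directly from Theorem 1 by feeding in the hypothesis as the required "weak gap structure." First I would recall that, as observed just above Corollary 1, any $LS$-sequence of partitions has exactly two distinct gap lengths at every stage, both of the form $\beta^n$ and $\beta^{n+1}$ for some $n\in\NN$; hence the first $m$ points of any $LS$-sequence $(y_n)_{n\in\NN}$ lie in a set of left endpoints of a partition that has at most $s=2$ distinct gaps, and whose cardinality is comparable to $m$ (say at most $C m$ for an absolute constant $C$, since the number of intervals in $\rho_{L,S}^n\pi$ grows geometrically and the partition attained after inserting the first $m$ points has $O(m)$ cells). This is the structural input that Theorem 1 wants.

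Next I would take the sequence $N_1 < N_2 < \cdots$ and the associated $LS$-sequences $(y_n^{(i)})_{n\in\NN}$ furnished by the hypothesis, together with the constant $\kappa>0$ with $|\{x_1,\dots,x_{N_i}\}\cap\{y_1^{(i)},\dots,y_{N_i}^{(i)}\}|\geq \kappa N_i$. For each $i$, set $M_i := |\{x_1,\dots,x_{N_i}\}\cap\{y_1^{(i)},\dots,y_{N_i}^{(i)}\}| \geq \kappa N_i$, and let $x_{j_1},\dots,x_{j_{M_i}}$ enumerate this intersection. These $M_i$ points are, by construction, among the first $N_i$ points of $(x_n)$; they are simultaneously among the first $N_i$ points of the $LS$-sequence $(y_n^{(i)})$, hence they are contained in the set of partition endpoints described in the previous paragraph, which has cardinality at most $C N_i$ and at most $s=2$ distinct gaps. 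Thus, with $\gamma := \kappa$, $K := C$, $s := 2$, the point set $x_1,\dots,x_{N_i}$ has a subset of $M_i \geq \gamma N_i$ elements sitting inside a set of at most $K N_i$ points with at most $s$ gaps, for infinitely many $N$ (namely all $N = N_i$). That is exactly the hypothesis of Theorem 1, so Theorem 1 yields that $(x_n)_{n\in\NN}$ does not have Poissonian pair correlations.

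The only point requiring genuine care — and the place I would expect to spend the most effort — is the bookkeeping that shows the ambient set of partition endpoints containing $y_1^{(i)},\dots,y_{N_i}^{(i)}$ really has cardinality $O(N_i)$ with a bound uniform in $i$. One must check that inserting the first $N_i$ points of the $LS$-sequence of points lands one inside $\rho_{L,S}^{n}\pi$ for an $n$ with $t_n$ of order $N_i$, using that the counting function $t_n$ grows geometrically (driven by $1/\beta$), so that the next refinement level still has only $O(N_i)$ endpoints. Since $\beta$ depends only on $L$ and $S$, and the hypothesis allows the $LS$-parameters to vary with $i$ in principle, if one wishes to permit that generality one should either assume the parameters are fixed (the natural reading) or note that $1/\beta \leq L+S$ is still needed to be bounded; in the intended setting $L,S$ are fixed, so $C$ is a genuine constant and the argument goes through cleanly. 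Everything else is a direct substitution into Theorem 1.
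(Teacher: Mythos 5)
Your proposal is correct and is essentially the argument the paper intends: the paper states Corollary 2 as an immediate consequence of Theorem 1 together with the observation that an $LS$-sequence of partitions has only the two gap lengths $\beta^n$ and $\beta^{n+1}$, and you fill in exactly the intended bookkeeping (the intersection of size $M_i\geq\kappa N_i$ sits inside the first $N_i$ points of the $LS$-sequence, which lie in a partition endpoint set of cardinality $O(N_i)$ with at most two gaps, so Theorem 1 applies with $\gamma=\kappa$, $s=2$). Your side remark that the constant $K$ is uniform only if the $LS$-parameters do not vary with $i$ is a legitimate point of care that the paper glosses over, but it does not change the verdict.
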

Note that for $S=0$ and $L=b$, we get the classical van der Corput sequence. \\ \\
\textbf{Example 2: Quasi-arithmetic sequences of degree $1$}\\ \\
As a second application of Theorem 2, we illustrate that we can recover a recent result by Larcher \cite{not15} which extends Theorem A of Bourgain, mentioned in the introduction, for a special class of sequences. First, we need the definition of so-called quasi-arithmetic sequences of degree $d$, see \cite{not9, not15}. 
\begin{defi}
Let $(a_n)_{n \in \NN}$ be a strictly increasing sequence of positive integers. We call this sequence quasi-arithmetic of degree $d$, where $d$ is a positive integer, if there exist constants $C,K >0$ and a strictly increasing sequence $(N_i)_{i \in \NN}$ of positive integers such that for all $i \geq 1$ there is a subset $A^{(i)} \subset (a_n)_{1 \leq n \leq N_i}$ with $| A^{(i)}| \geq C N_i$ such that $A^{(i)}$ is contained in a $d$-dimensional arithmetic progression $P^{(i)}$ of size at most $KN_i$. 
\end{defi} 
Further, it is known (see \cite{not9} for a proof) that a strictly increasing sequence $(a_n)_{n \in \NN}$ of positive integers is quasi-arithmetic of some degree $d$ if and only if $E(A_N) = \Omega(N^3)$. Therefore, studying the pair correlations of sequences with maximal order of additive energy amounts to investigating quasi-arithmetic sequences of some degree. Our Theorem 2 allows to recover the following result.  
\begin{theo}[Theorem 1 in \cite{not15}]
If $(a_n)_{n \in \NN}$ is quasi-arithmetic of degree $d=1$, then there is no $\alpha$ such that the pair correlations of $(\lbrace a_n \alpha \rbrace)_{n \in \NN}$ are Poissonian. 
\end{theo}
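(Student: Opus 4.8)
The plan is to deduce this from our gap theorem (Theorem 1 above) combined with the classical Three Gap Theorem. First I would dispatch the case of rational $\alpha$ separately: if $\alpha=p/q$, then $(\{a_n\alpha\})_{n\in\NN}$ assumes only finitely many values, so for any fixed small $s$ and all large $N$ the condition $\|x_l-x_m\|\le s/N$ is equivalent to $x_l=x_m$; the number of pairs $(l,m)$ with $1\le l\neq m\le N$ and $x_l=x_m$ is of order $N^2$, hence $F_N(s)\to\infty$ and the sequence is not Poissonian. From now on we may therefore assume $\alpha$ is irrational, so that distinct integers $a_n$ yield distinct points $\{a_n\alpha\}$.

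Next, fix the constants $C,K>0$ and the strictly increasing sequence $(N_i)_{i\in\NN}$ witnessing that $(a_n)$ is quasi-arithmetic of degree $1$, and for each $i$ let $A^{(i)}\subset (a_n)_{1\le n\le N_i}$ with $|A^{(i)}|\ge C N_i$ be contained in a one-dimensional arithmetic progression $P^{(i)}=\{\,b_i+j d_i: 0\le j<L_i\,\}$ with $L_i\le K N_i$. Multiplying by $\alpha$ and reducing modulo $1$, the image of $P^{(i)}$ is the set $\bigl\{\{(b_i+j d_i)\alpha\}: 0\le j<L_i\bigr\}$, which is the translate by $\{b_i\alpha\}$ of the initial segment $\bigl\{0,\beta_i,2\beta_i,\ldots,(L_i-1)\beta_i\bigr\}$ (taken modulo $1$) of the Kronecker sequence with parameter $\beta_i:=d_i\alpha$. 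A translation of $[0,1)$ does not alter the multiset of distances between neighbouring points, and the Three Gap Theorem tells us that this initial segment determines at most three distinct gap lengths, irrespective of $i$ (and of whether $\beta_i$ is rational). Hence the image of $P^{(i)}$ under $x\mapsto\{x\alpha\}$ is a set of at most $L_i\le K N_i$ points with at most three different gaps.

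Finally I would invoke Theorem 1. For each $i$ there are exactly $|A^{(i)}|\ge C N_i$ indices $n\le N_i$ with $a_n\in A^{(i)}$ (the $a_n$ being distinct), and the corresponding points $\{a_n\alpha\}$ are pairwise distinct (since $\alpha\notin\QQ$) and all lie in the set of at most $K N_i$ points described above, which has at most three gaps. Thus the hypotheses of Theorem 1 are met with $s=3$, $\gamma=C$, the same constant $K$, and the infinitely many values $N=N_i$, and Theorem 1 yields at once that $(\{a_n\alpha\})_{n\in\NN}$ does not have Poissonian pair correlations.

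The only step that is not pure bookkeeping is the identification of the set $\alpha\cdot P^{(i)}$ modulo $1$ with a translated finite piece of a Kronecker sequence, which is exactly what allows the Three Gap Theorem to bound the number of gaps by the absolute constant $3$ uniformly in $i$; the remaining care lies in the (easy) rational case and in checking that the selected subset really contains $\gg N_i$ \emph{distinct} points, which is why we single out irrational $\alpha$ at the start.
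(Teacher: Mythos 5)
Your argument is correct and is essentially the paper's own: identify the image of the containing arithmetic progression under $x\mapsto\{x\alpha\}$ with a translated initial segment of a Kronecker sequence, invoke the Three Gap Theorem to get at most three gaps among at most $KN_i$ points, and apply Theorem~1 with $s=3$, $\gamma=C$. Your extra care about rational $\alpha$ and the distinctness of the points $\{a_n\alpha\}$ is a welcome tightening of details the paper leaves implicit, but it is not a different route.
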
 
To argue this, we note that for infinitely many $N$ the set $(\lbrace a_n \alpha \rbrace)_{n=1, \ldots, N}$, where $(a_n)_{n \in \NN}$ is quasi-arithmetic of degree $d=1$, contains a subset of the form $S_{\alpha}:= (\lbrace a_{n_j} \alpha \rbrace)_{j =1, \ldots, L}$, $L \geq c N$, for some constant $c>0$, where we have $S_{\alpha} \subseteq (\lbrace j \alpha \rbrace)_{j=1, \ldots, M}$, $M \leq KN$, for some constant $K>0$. This means, for infinitely many $N$, there are subsets of $(\lbrace a_n \alpha \rbrace)_{n=1, \ldots, N}$ which are contained in point sets having at most three distinct gaps. \\
    
We mention that due to a result of Lachmann and Technau (\cite{not14}), we can immediately deduce that for almost all $\alpha$ the pair correlations of $(\lbrace a_n \alpha \rbrace)_{n \in \NN}$ are not Poissonian, if $(a_n)_{n \in \NN}$ is a quasi-arithmetic sequence of degree $d \geq 2$. \\

Recently, this result was further improved by the authors, who showed that there is \textbf{no} $\alpha$ such that the pair correlations of $(\lbrace a_n \alpha \rbrace)_{n \in \NN}$ are Poissonian, if $(a_n)_{n \in \NN}$ is quasi-arithmetic of degree $d$, for $d \geq 1$ (\cite{not31}). \\ \\
\textbf{Example 3: $(t,1)$-sequences} \\ \\
We will illustrate based on our results presented in the previous section that digital $(t,1)$-sequences, introduced by H.\ Niederreiter \cite{not30}, cannot have Poissonian pair correlations. For the definition and properties of $(t,s)$-sequences, we refer to \cite{not29}. We obtain the following corollaries:  
\begin{coro} 
A digital $(t,1)$-sequence has the finite gap property for all $N \in \NN$. Hence, digital $(t,1)$-sequences do not have Poissonian pair correlations.
\end{coro}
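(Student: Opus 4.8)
The plan is to derive the second assertion from the first by a direct appeal to Theorem~1, so that all the work goes into the finite gap property. For the first step: once we know there are constants $K$ and $s$, depending only on $q$ and $t$, such that for every $N$ the point set $x_1,\dots,x_N$ lies in a set of at most $KN$ points having at most $s$ distinct gaps, Theorem~1 (with $\gamma=1$ and these $K,s$, for this --- indeed every --- $N$) immediately gives that $(x_n)_{n\in\NN}$ is not Poissonian.

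To obtain the finite gap property I would argue as follows. Given $N$, pick $m$ with $q^m\le N<q^{m+1}$ and assume $m\ge t$ (the finitely many smaller $N$ are harmless). By the defining property of a digital $(t,1)$-sequence over $\FF_q$, the first $q^{m+1}$ points form a digital $(t,m+1,1)$-net in base $q$, so each of the $q^{m+1-t}$ elementary intervals $\big[aq^{-(m+1-t)},(a+1)q^{-(m+1-t)}\big)$ contains exactly $q^t$ of them; since $\{x_1,\dots,x_N\}\subseteq\{x_1,\dots,x_{q^{m+1}}\}$ and $q^{m+1}<qN$, it is enough to bound --- by a constant depending only on $q$ and $t$ --- the number of distinct gaps of the $(t,M,1)$-net formed by the first $q^{M}$ points ($M=m+1$). (If the finite gap property is wanted literally for the sets $\{x_1,\dots,x_N\}$, one uses in addition the base-$q$ digit expansion of $N$ to write the first $N$ points as a controlled union of such nets; the idea is unchanged.)

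The heart of the matter is then: the $(t,M,1)$-net given by the first $q^{M}$ points of a digital $(t,1)$-sequence has at most $s_{0}=s_{0}(q,t)$ distinct gaps, uniformly in $M$. I would prove this from the digital description. Fix an elementary interval $I_a$ of length $q^{-(M-t)}$; it contains exactly $q^t$ of the points, and if $0.y_1y_2\cdots$ is the base-$q$ expansion of one of them, then membership in $I_a$ prescribes $(y_1,\dots,y_{M-t})$, so the point's rescaled position inside $I_a$ is $0.y_{M-t+1}y_{M-t+2}\cdots$, while the digit vectors $\boldsymbol v\in\FF_q^{M}$ of $n-1$ for these points fill out an affine subspace of dimension $t$ depending on $a$. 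One then has to check, using the linear algebra of the generating matrix, that as $a$ and $M$ vary the resulting $q^t$-point within-cell configurations --- and likewise the gaps joining consecutive cells --- take only boundedly many shapes, so that the gap multiset of the net is supported on a set of size $O_{q,t}(1)$. Taking $K=q$ and $s=s_{0}$ (enlarged if necessary to absorb the finitely many small $N$) and invoking Theorem~1 finishes the argument.

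The step I expect to be the main obstacle is exactly this last one: the uniform-in-$M$ bound $s_{0}(q,t)$ on the number of distinct within-cell configurations, hence of distinct gaps. For an arbitrary $(t,M,1)$-net no such bound holds, so the proof must genuinely exploit the digital structure of a $(t,1)$-\emph{sequence} --- equivalently, the compatibility of the nets $\{x_1,\dots,x_{q^{M}}\}$ across all $M$ --- in order to reduce the statement to a finite-dimensional fact about the generating matrix; carrying out that reduction cleanly, with explicit constants, is the technical core. The remaining ingredients --- the reduction to nets, the passage to all $N$, and the application of Theorem~1 --- are routine.
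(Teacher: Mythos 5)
Your overall architecture is sound and matches the paper's: reduce to the point sets $\{x_1,\dots,x_{q^{M}}\}$ of net size, bound their number of distinct gaps by a constant $s_0(q,t)$, and feed this into Theorem~1 with $K=q$ and $\gamma=1$. The problem is that the step you defer --- the uniform-in-$M$ bound on the number of distinct gaps of the net formed by the first $q^{M}$ points --- \emph{is} the finite gap property; it is the entire content of the corollary, and your proposal only says that ``one then has to check'' it using the linear algebra of the generating matrix. No argument is actually given, and you yourself flag it as the main obstacle. As written, this is a correct plan with its central step missing, so it does not yet constitute a proof.

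For what it is worth, the paper closes this gap by an observation that is considerably more degenerate than the classification of within-cell shapes you anticipate. By the digital construction, each point of the block $\{y_{kb^m},\dots,y_{kb^m+b^m-1}\}$ is a $b$-adic rational $y_{n,1}/b+\dots+y_{n,m}/b^m$, i.e.\ a multiple of $1/b^m$; for quality parameter $t\ge 1$ the relevant rows of the generator matrix have rank $m-t$ on the digit vectors in question, so the block consists of only $b^{m-t}$ distinct values, each occurring with multiplicity $b^{t}$, and the $(t,m,1)$-net property (exactly $b^t$ points in every elementary interval of length $1/b^{m-t}$) places exactly one distinct value in each such interval. Hence each block carries essentially a single positive gap length $1/b^{m-t}$ (plus the zero gaps coming from coinciding points), and for $b^{m-1}<N<b^{m}$ the first $N$ points exhibit at most three distinct gaps. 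In other words, the $q^t$-point within-cell configurations you worry about collapse to a single repeated point, so no uniform-in-$M$ analysis of configurations is required --- but some concrete argument of this kind, exploiting that the points are $m$-digit truncations, must be supplied before Theorem~1 can be invoked.
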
 
\begin{proof}
Consider first a digital $(0,1)$-sequence $(y_n)_{n \in \NN_0}$ in base $b \geq 2$. By definition of a $(0,1)$-sequence, we have that for any $m \in \NN$ and any $k \in \NN_0$ the point set 
\begin{equation*}
\mathcal{P}:=\lbrace y_{kb^m}, \ldots, y_{kb^m+ b^m-1} \rbrace
\end{equation*}
forms a $(0,m,1)$-net in base $b$. I.e., we know that each elementary interval of length $1/b^m$ contains exactly one point of $\mathcal{P}$. Due to the digital method, i.e., the construction method of digital $(t,s)$-sequences, the elements of $\mathcal{P}$ have the form
\begin{equation*}
y_n = \frac{y_{n,1}}{b} + \frac{y_{n,2}}{b^2} + \ldots + \frac{y_{n,m}}{b^m}, 
\end{equation*}
where $y_{n,1}, y_{n,2} \ldots, y_{n,m} \in \lbrace 0, 1, \ldots, b-1 \rbrace$ and $n \in \lbrace kb^m, \ldots, kb^m+ b^m-1 \rbrace $. This is the result, when multiplying the generator matrix  with a vector containing the coefficients of the $b$-adic digit expansion of the integer $n$. Therefore, we can conclude that the distance between two neighbouring points of $\mathcal{P}$ is $1/b^m$. For a quality parameter $t \geq 1$, every elementary interval of length $1/b^{m-t}$ contains exactly $b^t$ point. In this case, due to the linear dependence of $t+1$ rows of the generator matrix, our point set $\mathcal{P}_t$ contains only $b^{m-t}$ distinct points and each of those coincides with $b^t-1$ other points of $\mathcal{P}_t$. Therefore, for quality parameters $t \geq 1$, the distance between two neighbouring points of $\mathcal{P}_t$ is $1/b^{m-t}$. For $b^{m-1} < N < b^{m}$, we note that there are three distinct gaps between neighbouring elements of the first $N$ elements of some $(t,1)$-sequence. I.e., we have the finite gap property for all $N \in \NN$.
\end{proof}
\begin{coro}
Let $(x_n)_{n \in \NN}$ be a sequence in $[0,1)$ with the following property: There is a constant $\kappa>0$, a sequence $N_1 < N_2, \ldots, $ of positive integers and for each $N_i$, $i \geq 1$, a digital $(t,1)$-sequence $(y_n^{(i)})_{n \in \NN}$ such that
\begin{equation*}
| \lbrace x_1, \ldots, x_{N_i} \rbrace \cap \lbrace y_1^{(i)}, \ldots, y_{N_i}^{(i)} \rbrace  | \geq \kappa N_i,
\end{equation*}
then $(x_n)_{n \in \NN}$ does not have Poissonian pair correlations.  
\end{coro}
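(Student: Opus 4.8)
The plan is to reduce the statement to Theorem 1 by combining the intersection hypothesis with the finite gap property of digital $(t,1)$-sequences proved in the previous corollary. Indeed, the hypothesis supplies, for each index $i$, a large chunk of the initial segment $x_1,\ldots,x_{N_i}$ that sits inside a point set which is both small (at most $N_i$ points) and has only boundedly many distinct gaps; this is exactly the configuration that Theorem 1 rules out.

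Concretely, I would fix $i$, write $M_i := |\{x_1,\ldots,x_{N_i}\}\cap\{y_1^{(i)},\ldots,y_{N_i}^{(i)}\}| \ge \kappa N_i$, and let $x_{j_1},\ldots,x_{j_{M_i}}$ be an enumeration of the (distinct) points counted by this intersection. By construction these $M_i$ points all lie in the set $\{y_1^{(i)},\ldots,y_{N_i}^{(i)}\}$, whose cardinality is at most $N_i$. Invoking the previous corollary, the point set formed by the first $N_i$ elements of the $i$-th digital $(t,1)$-sequence has at most three distinct gaps between neighbouring elements, and --- crucially --- this bound of three does not depend on $i$, nor on the base or on the quality parameter of that sequence. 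Thus the hypotheses of Theorem 1 are satisfied with $\gamma = \kappa$, $K = 1$, $s = 3$, and with the infinite set of witnesses $\{N_i : i \ge 1\}$. Theorem 1 then immediately gives that $(x_n)_{n\in\NN}$ does not have Poissonian pair correlations.

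There is essentially no analytic obstacle here; the only care needed is bookkeeping. One should note that the ``subset with $M \ge \gamma N$ elements'' appearing in Theorem 1 may be taken to consist of distinct values, which is precisely what the set-theoretic intersection in the hypothesis delivers, and one should check that the number of gaps remains bounded by the same constant $s = 3$ along the whole sequence $(N_i)_{i\ge 1}$ even though the auxiliary sequences $(y_n^{(i)})_{n\in\NN}$ (together with their bases and quality parameters) are allowed to vary with $i$. Once these points are observed, the corollary is an immediate consequence of Theorem 1 and the previous corollary.
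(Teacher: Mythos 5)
Your proposal is correct and is exactly the argument the paper intends: the corollary is stated as an immediate consequence of Theorem 1 together with the finite gap property (at most three distinct gaps, uniformly in $N$, base, and quality parameter) established in the preceding corollary, applied with $\gamma=\kappa$, $K=1$, $s=3$ along the witnesses $N_i$. The paper gives no separate proof, and your bookkeeping (distinct values from the set-theoretic intersection, containing set of cardinality at most $N_i$) matches what is needed.
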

\begin{remark}
We note that also \textbf{general} (not necessarily digital) $(0,1)$-sequences in base $b$ do not have Poissonian pair correlations. To see this, we consider the sequence of positive integers $N_1 = b^{m_1} < N_2 = b^{m_2} < \ldots$. We now aim at counting the relative number of pairs of points having distance $\leq 1/N_i$, for $i=1, 2, \ldots$. As every elementary interval of length $1/b^{m_i} = 1/N_i$ contains exactly one point, two non-neighbouring points have a distance $>1/N_i$. Hence, we have 
\begin{equation*}
\frac{1}{N_i} \# \left\lbrace 1 \leq l \neq k \leq N_i \  | \ \| x_l -x_k \| \leq \frac{1}{N_i} \right\rbrace \leq 2. 
\end{equation*} 
If $N_i-o(N_i)$ neighbouring points have an exact distance of $1/N_i$ for infinitely many $i$, we attain equality in the previous expression, as $i \to \infty$. In this case though, we can use Theorem 1 to conclude that a $(0,1)$-sequence is not Poissonian. \\

Another possibility to achieve equality in the pair correlation statistics is that $N_i-o(N_i)$ neighbouring points have a distance $ \leq 1/N_i$. In this case, we can choose $s=1/2$ in the definition of the pair correlation function. I.e., in order to obtain 
\begin{equation*}
\lim_{i \to \infty} \frac{1}{N_i} \# \left\lbrace 1 \leq l \neq k \leq N_i \ | \ \| x_l -x_k \| \leq \frac{1}{2N_i} \right\rbrace = 1,
\end{equation*}
$N_i-o(N_i)$ ordered pairs of points need to have a distance smaller than $\frac{1}{2N_i}$. Thus, the length of (about) every second gap has to be $\leq \frac{1}{2N_i}$. Consequently, the lengths of the remaining gaps need to be $\geq 1/N_i$. This, however, means that about $N_i$ ordered pairs of points have a distance $>1/N_i$ (if $\mathcal{O}(N_i)$ pairs have an exact distance of $1/N_i$, we can again apply Theorem 1) and we get a contradiction if we choose $s=1$ in the pair correlation function.  \\
We also strongly believe that \textbf{general} $(t,1)$-sequences in base $b$, for a non-zero quality parameter $t$, fail to have Poissonian pair correlations.
\end{remark}
\section{Proof of Theorem 1}
To us, it seems to be helpful to divide the proof of Theorem 1 into two steps. In the first step, we prove a weaker result which we formulate as Proposition 1 below. In the second step, we prove the Theorem. 
\begin{prop}
Let $(x_n)_{n \in \NN}$ be a sequence in $[0,1)$ with the following property: There is an $s \in \NN$, a positive real number $\gamma$, and infinitely many $N$ such that the point set $x_1, \ldots, x_N$ has a subset with $M \geq \gamma N$ elements, denoted by $x_{j_1}, \ldots, x_{j_M}$, which has at most $s$ different distances between neighbouring sequence elements. Then, $(x_n)_{n \in \NN}$ does not have Poissonian pair correlations.
\end{prop}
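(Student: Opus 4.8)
I would argue by contradiction. Suppose $(x_n)_{n\in\NN}$ has Poissonian pair correlations, and fix one of the infinitely many admissible $N$ (large). Arrange the structured subset in increasing order as $z_1<z_2<\cdots<z_M$ with $M\ge\gamma N$, and look at the $M-1$ consecutive differences $z_{i+1}-z_i$; by hypothesis these take at most $s$ distinct values, so the pigeonhole principle produces one value $g=g(N)$ that is attained by at least $p=p(N)\ge (M-1)/s$ of them. The key — and essentially cost-free — observation is that the $p$ gaps of length $g$ are pairwise disjoint subintervals of $[z_1,z_M]\subseteq[0,1)$, so $pg\le 1$, and therefore
\begin{equation*}
N g(N)\ \le\ \frac{N}{p}\ \le\ \frac{sN}{M-1}\ \le\ \frac{s}{\gamma}\cdot\frac{N}{N-1},
\end{equation*}
which stays bounded (say by $2s/\gamma$) for all large admissible $N$; in particular $g(N)\le 1/2$, so the $p$ pairs $\{z_i,z_{i+1}\}$ with $z_{i+1}-z_i=g$ are $p$ unordered, hence $2p$ ordered, pairs of elements of $\{x_1,\dots,x_N\}$ at distance exactly $g$.

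Then I would pass to a subsequence. Since $Ng(N)$ lives in a fixed bounded interval along the admissible $N$, choose a subsequence $N_1<N_2<\cdots$ of admissible indices with $N_k g(N_k)\to\lambda_0$ for some $\lambda_0\ge 0$. Fix an arbitrary $\xi>0$ (with $\xi<\lambda_0$ in case $\lambda_0>0$). For all large $k$ we have $\lambda_0-\xi< N_k g(N_k)\le \lambda_0+\xi$, so the $2p(N_k)$ ordered pairs above all have distance in $\big(\tfrac{\lambda_0-\xi}{N_k},\tfrac{\lambda_0+\xi}{N_k}\big]$; hence they are counted by $F_{N_k}(\lambda_0+\xi)$ but not by $F_{N_k}(\lambda_0-\xi)$, whence
\begin{equation*}
F_{N_k}(\lambda_0+\xi)-F_{N_k}(\lambda_0-\xi)\ \ge\ \frac{2p(N_k)}{N_k}\ \ge\ \frac{2(\gamma N_k-1)}{sN_k}.
\end{equation*}
Letting $k\to\infty$ and invoking the Poissonian property at the two fixed points $\lambda_0-\xi$ and $\lambda_0+\xi$, the left-hand side tends to $4\xi$ while the right-hand side tends to $2\gamma/s$, so $4\xi\ge 2\gamma/s$. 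Since $\xi>0$ was arbitrary, this is absurd, and $(x_n)_{n\in\NN}$ cannot be Poissonian. (If $\lambda_0=0$ one runs the identical estimate with $F_{N_k}(\xi)-F_{N_k}(0)=F_{N_k}(\xi)$, obtaining $2\xi\ge 2\gamma/s$ for every $\xi>0$, which is equally impossible.)

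The step requiring genuine care — and really the heart of the argument — is the boundedness of $Ng(N)$: it is what turns the purely combinatorial fact that the subset repeats one gap length of order $N/s$ times into the statement that $F_N$ carries an increment of size at least $2\gamma/s-o(1)$ concentrated at a location that is bounded and, along a subsequence, convergent. Everything else is then forced, since $s\mapsto F_N(s)$ must converge pointwise to the continuous function $2s$ and so cannot sustain such a jump along a subsequence. I note that nothing was assumed about the remaining $N-M$ points: because $F_N$ counts pairs and is nondecreasing in the radius, those points can only add pairs and never erase the increment produced by the structured subset — which is precisely why Proposition 1 does not yet need the ambient set of at most $KN$ points appearing in Theorem 1.
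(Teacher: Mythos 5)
Your proof is correct, and it takes a genuinely different --- and considerably shorter --- route than the paper's. Your key observation is that the pigeonholed gap length $g$, occurring $p \geq (M-1)/s$ times, automatically satisfies $Ng \leq sN/(M-1) = O(1)$ because the $p$ disjoint gaps fit inside $[0,1)$; this lets you pass to a subsequence with $N_k g(N_k) \to \lambda_0$ and read off a jump of size at least $2\gamma/s$ in the limit of $F_{N_k}$ across the single point $\lambda_0$, which the continuous limit $s \mapsto 2s$ cannot sustain. The paper instead sorts the $s$ gap lengths by their size relative to $1/N_i$, extracts thresholds $w_1, w_2$ separating the gaps with $N_i d_j^{(i)} \to 0$, $N_i d_j^{(i)} \asymp 1$ and $N_i d_j^{(i)} \to \infty$, and argues in two parts: for a ``medium'' gap occurring $\Omega(M_i)$ times it runs an increment argument close in spirit to yours (with a pigeonhole over a partition of $[L_1, L_2/\gamma]$ in place of your convergent subsequence), while if only the ``small'' gaps occur $\Omega(M_i)$ times it uses a Cauchy--Schwarz counting argument on intervals of length $1/M_i$ to force $F_{N_i}(1/\gamma) \to \infty$. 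Your handling of the case $\lambda_0 = 0$ replaces that entire second part by the one-line remark that $F_{N_k}(\xi) \geq 2p/N_k \to 2\gamma/s > 2\xi$ for small $\xi$; this works precisely because the pigeonhole already hands you one gap length repeated $\Omega(N)$ times, whereas the paper's heavier machinery is partly designed for reuse in the proof of Theorem 1, where the relevant gaps are combinations $K_1 d_1 + \cdots + K_s d_s$ and no single length need recur so often a priori. One cosmetic slip: the inequality $sN/(M-1) \leq (s/\gamma)\cdot N/(N-1)$ requires $M-1 \geq \gamma(N-1)$, which can fail when $\gamma < 1$; but your actual conclusion $N g(N) \leq 2s/\gamma$ for large admissible $N$ follows directly from $Ng \leq sN/(\gamma N - 1)$, so nothing downstream is affected.
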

\textit{Proof of Proposition 1} \\ \\
Let $s$ be minimal with the property formulated in Proposition 1, then we can choose $N_1 < N_2 < \ldots$ such that for all $i$ the set of points $x_1, \ldots, x_{N_i}$ has a subset of points $x_{j_1^{(i)}}, \ldots, x_{j_{M_i}^{(i)}}$, $M_i \geq \gamma N_i$, having exactly $s$ gaps. In the sequel we will write $j_k:= j_k^{(i)}$, for all $k=1, \ldots, M_i$. For every $i$, we denote the lengths of these gaps by $d_1^{(i)} < d_2^{(i)} < \ldots < d_s^{(i)}$. Let $w_1$, $0 \leq w_1 <s$, be maximal such that the following holds: For all $\epsilon >0$ there are infinitely many $i$ such that $d_{w_1}^{(i)} \leq \epsilon/M_i \leq \epsilon / \gamma N_i$. Then, we can choose the sequence $(N_i)_{i \in \NN}$ or $(M_i)_{i \in \NN}$ such that $\lim_{i \to \infty} N_i d_{w_1}^{(i)} = \lim_{i \to \infty} M_i d_{w_1}^{(i)} =0$. Therefore, there exists an $L_1>0$ such that $d_{w_1 +1}^{(i)} \geq L_1/M_i \geq L_1 / N_i$ for all $i$ large enough (w.\ l.\ o.\ g.\ we may choose $(N_i)$ or $(M_i)$ such that this holds for all $i$). \\

Now, let $w_2$, $w_1 < w_2 \leq s$, be minimal with the following property: For all $\epsilon > 0$ there are infinitely many $i$ with $d_{w_2}^{(i)} \geq \epsilon / M_i \geq \epsilon /N_i$ and consequently we can choose w.\ l.\ o.\ g.\ $(N_i)$ or $(M_i)$ such that $\lim_{i \to \infty} N_i d_{w_2}^{(i)} = \lim_{i \to \infty} M_i d_{w_2}^{(i)} = \infty$. Hence, there is an $L_2 >0$ such that $d_{w_2 -1}^{(i)} \leq L_2/M_i \leq L_2 / \gamma N_i$ for all $i$ large enough (w.\ l.\ o.\ g.\ we may choose $(N_i)$ or $(M_i)$ such that this holds for all $i$).    \\

To sum up, we can choose $(N_i), (M_i), w_1, w_2, L_1, L_2$ such that 
\begin{equation*}
\lim_{i \to \infty} N_i d_{j}^{(i)} = 0 \text { for } j=1, \ldots, w_1 \text{ and } \lim_{i \to \infty} N_i d_{j}^{(i)} = \infty  \text{ for } j=w_2, \ldots, s
\end{equation*}
and 
\begin{equation*}
L_1/N_i \leq L_1/M_i \leq d_{j}^{(i)} \leq  L_2/M_i \leq L_2/\gamma N_i, \text{ for } j=w_1+1, \ldots, w_2 -1. 
\end{equation*}
Above equations also hold if $N_i$ is replaced by $M_i$. \\

Let $l_j^{(i)}$ denote the number of gaps of length $d_j^{(i)}$. Clearly, $\lim_{i \to \infty} l_j^{(i)}/M_i = 0$, for all $j= w_2, \ldots, s$. \\ \\
\textit{Proof Part 1:} \\

Let now $j$ be such that $w_1 +1 \leq j \leq w_2 -1$. Assume that $l_j^{(i)}/M_i$ does not tend to zero, i.e., there is a $\delta >0$ such that $l_j^{(i)} > \delta M_i \geq \delta \gamma N_i$ for all $i$, i.e., also $l_j^{(i)}/N_i$ does not tend to zero. We will show that in this case $(x_n)_{n \in \NN}$ cannot have Poissonian pair correlations. We partition the interval $[L_1, L_2/ \gamma]$ into $\lceil 2 \frac{L_2/ \gamma -L_1}{\gamma \delta} \rceil$ parts of equal length, i.e., the maximal length is at most $\gamma\delta/2$. There is one such subinterval $[s_1,s_2/ \gamma]$ with
\begin{equation*}
\frac{s_1 }{N_i} \leq \frac{s_1}{M_i} \leq d_j^{(i)} \leq \frac{s_2}{M_i} \leq \frac{s_2}{\gamma N_i},
\end{equation*} 
for infinitely many $i$. Note that there are at least $2 l_j^{(i)}$ pairs of elements of $x_{j_1}, \ldots, x_{j_{M_i}}$ having distance $d_j^{(i)}$. Hence,
\begin{align*}
& \# \lbrace l \neq m \in \lbrace j_1, \ldots, j_{M_i} \rbrace: \| x_l -x_m \| \leq s_2 / \gamma N_i \rbrace \\
& \geq \# \lbrace l \neq m \in \lbrace j_1, \ldots, j_{M_i} \rbrace: \| x_l -x_m \| \leq s_2/M_i \rbrace \\
& \geq \# \lbrace l \neq m \in \lbrace j_1, \ldots, j_{M_i} \rbrace: \| x_l -x_m \| \leq s_1/M_i \rbrace + 2 l_j^{(i)} \\
& \geq \# \lbrace l \neq m \in \lbrace j_1, \ldots, j_{M_i} \rbrace: \| x_l -x_m \| \leq s_1/N_i \rbrace + 2 l_j^{(i)},
\end{align*}
and thus also $N_i F_{N_i}(s_2 / \gamma) \geq N_i F_{N_i}(s_1) +2 l_j^{(i)}$. \\

If the pair correlations of $(x_n)_{n \in \NN}$ were Poissonian, then we had 
\begin{equation*}
\frac{2 s_2}{\gamma} - 2s_1 = \lim_{i \to \infty} F_{N_i}(s_2 / \gamma) - \lim_{i \to \infty} F_{N_i}(s_1) \geq \limsup_{i \to \infty} \frac{2 l_j^{(i)}}{N_i} \geq 2 \gamma \delta, 
\end{equation*}
which is a contradiction as we have $s_2/ \gamma -s_1 < \gamma \delta$. \\ \\
\textit{Proof Part 2:} \\

We can deduce from Poissonianess that $\lim_{i \to \infty} l_j^{(i)}/N_i = \lim_{i \to \infty} l_j^{(i)}/M_i = 0$, for all $j= w_1 +1, \ldots, s$ and therefore 
\begin{equation*}
\lim_{i \to \infty} \frac{l_1^{(i)} + \ldots + l_{w_1}^{(i)}}{M_i} = 1. 
\end{equation*}
We define $\tilde{\tilde{l}}:= l^{(i)}:= l_{w_1+1}^{(i)} + \ldots + l_{s}^{(i)}$ and choose $i$ large enough, such that $ d_{w_1}^{(i)} \leq \frac{1}{2 M_i}$; a requirement that will be needed at a later step of the proof. Also note that $\tilde{\tilde{l}} = o(M_i)$. \\

In the sequel, we will call distances $d_j^{(i)}$ for $j = w_1 + 1, \ldots, s$ "large gaps" and the remaining ones "small gaps". We will divide the unit interval in $\tilde{l} \leq 2 \tilde{\tilde{l}} +1$ subintervals in the following manner: The largest possible union of neighbouring large gaps form an open interval and the largest possible union of neighbouring small gaps form an closed interval, as illustrated in Figure 1. 
\begin{figure}[H]
	\centering
  \includegraphics[width=0.8\textwidth]{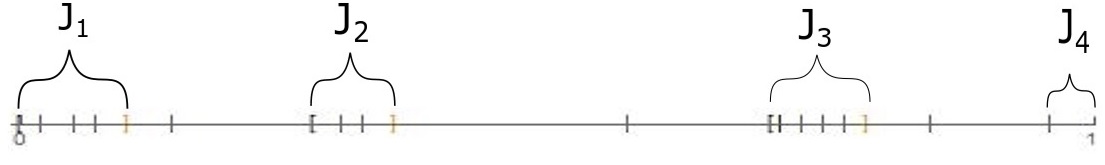}
	\caption{The splitting of the unit interval into unions of small gaps and large gaps. }
	\label{fig1}
\end{figure}  
We denote the $l \leq \tilde{\tilde{l}} +1$ closed intervals formed by the small gaps by $\mathcal{J}_1, \ldots, \mathcal{J}_l$ and their lengths by $J_1, \ldots, J_l$. We partition each such interval $\mathcal{J}_j$ into $\lfloor J_j M_i \rfloor$ intervals of length $1/M_i$. The intervals obtained by this splitting are denoted by $\mathcal{K}_1, \ldots, \mathcal{K}_n$. The number of points of $x_{j_1}, \ldots, x_{j_{M_i}}$, lying in $\mathcal{K}_j$ is denoted by $v_j$. As we have chosen $i$ large enough, such that $ d_{w_1}^{(i)} \leq \frac{1}{2 M_i}$, most intervals $\mathcal{K}_j$ contain at least two points of $x_{j_1}, \ldots, x_{j_{M_i}}$ which have a distance $\leq 1/M_i$ and only $l = o(M_i)$ intervals contain only one point. Hence, for such intervals $\mathcal{K}_j$ containing at least two point, we get  
\begin{equation*}
\# \lbrace l \neq m \in \lbrace j_1, \ldots, j_{M_i} \rbrace: x_l, x_m \in \mathcal{K}_j \text{ and } \| x_l - x_m \| \leq \frac{1}{M_i} \rbrace \geq \frac{v_j^2}{2}
\end{equation*}
and consequently 
\begin{equation*}
\# \lbrace l \neq m \in \lbrace j_1, \ldots, j_{M_i} \rbrace: \| x_l - x_m \| \leq \frac{1}{M_i} \rbrace \geq \sum_{j=1}^n\!{\vphantom{\sum}}^* \frac{v_j^2}{2},
\end{equation*}
where the summation $\sum\!{\vphantom{\sum}}^*$ means that only intervals $\mathcal{K}_j$ containing at least two points of $x_{j_1}, \ldots, x_{j_{M_i}}$ are taken into account.   Recall that both $\tilde{\tilde{l}}$ and $l$ are of the form $o(M_i)$; i.e., all of the $M_i$ points, except $o(M_i)$ many, are contained in the intervals $\mathcal{K}_j$). Therefore we have 
\begin{align*}
& \sum_{j=1}^n\!{\vphantom{\sum}}^* v_j \geq  M_i - o(M_i) \geq \gamma N_i - o(N_i), \\
& n \leq M_i (J_1 + \ldots + J_l) + l \leq N_i (J_1 + \ldots + J_l) + l, \\ 
\end{align*}
and consequently we obtain employing the Cauchy-Schwarz inequality and the fact that $J_1 + \ldots + J_l \leq M_i d_{w_1}^{(i)} \to 0$, as $i \to \infty$,
\begin{align*}
\frac{1}{\gamma} F_{N_i}(1/ \gamma) & \geq \frac{1}{M_i} \# \lbrace 1 \leq l \neq m \leq N_i : \| x_l - x_m \| \leq \frac{1}{\gamma N_i}  \rbrace \\
&\geq \frac{1}{M_i} \# \lbrace l \neq m \in \lbrace j_1, \ldots, j_{M_i} \rbrace: \| x_l - x_m \| \leq \frac{1}{M_i} \rbrace \\
&\geq \frac{1}{M_i} \sum_{j=1}^n\!{\vphantom{\sum}}^* \frac{v_j^2}{2} \geq \frac{n}{M_i} \left( \frac{M_i - o(M_i)}{n} \right)^2 \\
&\geq  \frac{1}{M_i (J_1 + \ldots + J_l) + l} \frac{(M_i - o(M_i))^2}{M_i} \to \infty, \qquad i \to \infty.
\end{align*}
This allows us to deduce that $(x_n)_{n \in \NN}$ cannot have Poissonian pair correlations, and proves Proposition 1. \hfill $\square$ \\ \\
\textit{Proof of Theorem 1} \\ \\
The set of points $x_{j_1}, \ldots, x_{j_M}$ (recall that $M \geq \gamma N$) is contained in the set $\lbrace z_1, \ldots, z_{KN} \rbrace$, where this set of points has exactly $s$ distinct gaps. In the sequel, we put $K=1$. The general case is treated quite similarly. We again denote the gaps by $d_1 < \ldots < d_s$. Hence, the gaps between the points $x_{j_1}, \ldots, x_{j_M}$ have the form 
\begin{equation*}
K_1 d_1 + K_2 d_2+ \ldots + K_s d_s, \text{ where } K_1, K_2, \ldots, K_s \in \NN_0.
\end{equation*}
The number of gaps of the form $K_1 d_1 + K_2 d_2+ \ldots + K_s d_s$ is denoted by $l_{K_1, K_2, \ldots, K_s}$. Note that we have 
\begin{equation}\label{eq:eq11}
\sum_{K_1, K_2, \ldots, K_s \geq 0} l_{K_1, K_2, \ldots, K_s} \geq \gamma N -1.
\end{equation}
Further, the following equality holds:
\begin{equation*}
\sum_{K_1, K_2, \ldots, K_s \geq 0} (K_1 + K_2 + \ldots + K_s ) l_{K_1, K_2, \ldots, K_s} = N. 
\end{equation*}
To see this, we note that a gap of the form $K_1 d_1 + K_2 d_2+ \ldots + K_s d_s$ comprises $K_1 + K_2+ \ldots + K_s $ points of the original set $\lbrace z_1, \ldots, z_{N} \rbrace$. \\

If all $l_{K_1,K_2, \ldots, K_s}$, with $ K_1 + K_2 + \ldots +K_s \leq su$, where $u=2/\gamma$, would be smaller than $\epsilon_0N$, $\epsilon_0:= \frac{\gamma^{s+1}}{2^{s+1}s^s}$, we had
\begin{align*}
N \geq &  \sum_{ \substack{K_1, K_2, \ldots, K_s \geq 0 ,\\ K_1 + K_2 + \ldots +K_s \geq su+1} } (K_1 + K_2 + \ldots + K_s)  l_{K_1, K_2, \ldots, K_s} \\
\geq & (su+1)  \sum_{ \substack{K_1, K_2, \ldots, K_s \geq 0 ,\\ K_1 + K_2 + \ldots +K_s \geq su+1} } l_{K_1, K_2, \ldots, K_s} \\
\geq & \left(\gamma N -1 - (su)^s \epsilon_0 N \right) su= \left( \gamma N -1 - \frac{2^s s^s}{\gamma^s}\frac{\gamma^{s+1}}{2^{s+1}s^s}N \right) \frac{2s}{\gamma} = sN - \frac{2s}{\gamma},
\end{align*}
which is a contradiction. We can conclude: \\ \\
\textit{Result $I$:}\\
There exist indices $K_1, K_2, \ldots, K_s$, with $K_1 +K_2 +\ldots +K_s \leq su$, such that $l_{K_1, K_2, \ldots, K_s} \geq \epsilon_0 N$, i.e., we have a gap of the form $K_1 d_1 + K_2 d_2 + \ldots + K_s d_s $ amongst the set of points $x_{j_1}, \ldots, x_{j_M}$ which appears at least $\epsilon_0 N$ many times. \\ \\
\textit{Result $II$:}\\
If instead of (\ref{eq:eq11}) we had the slightly different condition 
\begin{equation}\label{eq:eq12}
\sum_{K_1, K_2, \ldots, K_{s-1} \geq 0} \sum_{K_s \geq 1}  l_{K_1, K_2, \ldots, K_s} \geq \ \tau N 
\end{equation}
for some $\tau > 0$, then we can show in complete analogy to \textit{Result $I$}, that there exist indices $K_1, K_2, \ldots, K_{s-1} \geq 0$ and  $K_s \geq 1$, with $K_1 +K_2 +\ldots +K_s \leq su$, such that $l_{K_1, K_2, \ldots, K_s} \geq \epsilon_1 N$, where $\epsilon_1 = \epsilon_1(\tau) > 0$. Of course, the role of $K_s$ in the sum (\ref{eq:eq12}) could be interchanged with any other $K_j$, for $j=1, \ldots, s-1$. \\ 
 
We consider now the sequences $(d_j^{(i)})_{i \in \NN}$, for $j=1, \ldots, s$ and distinguish the following cases. First, note that there do not exist subsequences $(i_k)_{k \in \NN}$, such that $N_{i_k} d_j^{(i_k)} \to 0$ or $N_{i_k} d_j^{(i_k)} \to \infty$, as $k \to \infty$, for all $j=1, \ldots, s$. 
\begin{enumerate}
\item Assume that there exists a subsequence $(i_k)$ (w.\ l.\ o.\ g., we assume in the sequel $(i_k)=(i)$), such that $N_i d_j^{(i)}$ is bounded from below and above, for $j=1, \ldots, w_1$, and $N_i d_j^{(i)} \to \infty$, as $i \to \infty$, for $j= w_1 +1, \ldots, s$. We know due to \textit{Result $I$} that for each $i$ there exist $K_j^{(i)}$, $j=1, \ldots, s$, such that the gap of length $g_i = K_1^{(i)} d_1^{(i)} + \ldots +  K_s^{(i)} d_s^{(i)}$ appears at least $\epsilon_0 N_i$ times. Consequently $g_i \epsilon_0 N_i \leq 1$. Therefore, 
\begin{equation*}
K_{w_1 +1}^{(i)} d_{w_1 +1}^{(i)} + \ldots + K_s^{(i)} d_{s}^{(i)} \leq g_i \leq \frac{1}{\epsilon_0 N_i},
\end{equation*} 
and finally 
\begin{equation*}
K_{w_1 +1}^{(i)} + \ldots + K_s^{(i)} \leq \frac{1}{\epsilon_0 N_i  d_{w_1+1}^{(i)}} \to 0,\text{ as } i \to \infty, 
\end{equation*}
which implies that all $K_j^{(i)} =0$, $j=w_1 +1, \ldots, s$, for $i$ large enough. I.e., for a sufficiently large $i$ there exists a gap of the form  $K_1^{(i)} d_1^{(i)} + \ldots + K_{w_1}^{(i)} d_{w_1}^{(i)}$, which appears at least $\epsilon_0 N_i$ times. Due to this consideration, we can conclude that $g_i N_i$ is bounded from above and below and therefore the strategies of \textit{Proof Part 1} of Proposition 1 are applicable. \\  
\item If $N_i d_j^{(i)}$ is bounded from above and below for all $j=1, \ldots, s$, then, due to \textit{Result $I$}, we can immediately apply \textit{Proof Part 1} of Proposition 1. \\
\item Assume now
\begin{align*}
&\lim_{i \to \infty} N_i d_j^{(i)} = 0, \text{ for } j=1, \ldots, w_1 \text{ and } \\
&\lim_{i \to \infty} N_i d_j^{(i)} = \infty, \text{ for } j=w_1+1, \ldots, s.
\end{align*}
This implies that for all $\epsilon > 0$ the number of gaps amongst the set of points  $x_{j_1}, \ldots, x_{j_M}$ of length $\geq \epsilon \frac{1}{N_i}$ is, as $i \to \infty$, of order $o(N_i)$ (otherwise more than $N_i$ points of $\lbrace z_1, \ldots, z_{N_i} \rbrace$ have to be removed to obtain $\Omega(N_i)$ many intervals of length $\geq  \epsilon \frac{1}{N_i}$). Thus, it is admissible to apply \textit{Proof Part 2} of Proposition 1.  
\item Assume $\lim_{i \to \infty} N_i d_j^{(i)} = 0, \text{ for } j=1, \ldots, w_1$ and $N_i d_j^{(i)}$ is bounded from above and below, for $j=w_1+1, \ldots, s$. If the number of gaps with length $d_j^{(i)}$, for all $j=w_1+1, \ldots, s$, is of order $o(N_i)$, then we argue as in Case 3. Assume, w.\ l.\ o.\ g., that the number of the gaps with length $d_s^{(i)}$ is $\geq \tau N_i$, for all $i$, for some $\tau >0$. Then, condition (\ref{eq:eq12}) is satisfied and we can apply \textit{Result $II$} in order to finally proceed as in \textit{Proof Part 1} of Proposition 1. \\
\item Finally, we consider the case 
\begin{equation*}
\lim_{i \to \infty} N_i d_j^{(i)} = 0, \text{ for } j=1, \ldots, w_1 \text{ and } \lim_{i \to \infty} N_i d_j^{(i)} = \infty, \text{ for } j=w_2, \ldots, s,
\end{equation*}
and $N_i d_j^{(i)}$ is bounded from above and below for $j=w_1 +1, \ldots, w_2-1$. Combining the Cases 1, 3 and 4, allows to deduce the Theorem in this case as well. \hfill $\square$
\end{enumerate}
\section{Proof of Theorem 2}            
\begin{proof}
We investigate the pair correlations of the sequence $x_n:=\lbrace 3^n \alpha \rbrace$, $n=1,2, \ldots$, where $\alpha=0.a_1a_2\ldots$ and $a_1a_2\ldots$ is an infinite de Bruijn word. For simplicity, we consider the alphabet $A = \lbrace 0,1,2\rbrace$ and set $s=2$ and $N=3^m$ in (\ref{eq:pc1}). If the first $m$ digits of two distinct sequence elements $x_l, x_k$ match, then their distance is less than $2/N$. Since the string $a_1 \ldots a_{3^m+m-1}$, contains by the definition of a de Bruijn word, every word of length $m$ exactly once, this case cannot occur. Let us now consider words starting with blocks of the form $\underbrace{a_1 \ldots a_j 2 0 \ldots 0}_{m \text{ digits }}$ and $\underbrace{a_1 \ldots a_j 1 2 \ldots 2}_{m \text{ digits } }$. The resulting pairs of sequence elements, denoted by $x_k$ and $x_l$, have a distance less than $2/N$, as 
\begin{equation*}
\| x_k -x_l \| \leq \frac{1}{3^m} + \frac{2}{3^{m+1}} + \frac{2}{3^{m+2}} + \ldots \leq \frac{2}{3^{m}} = \frac{2}{N}.
\end{equation*}
Here, we obtain at most (summing over all possible positions for $j$) 
\begin{equation*}
2 \sum_{j=1}^{m-1} 3^{j}  = -3 + 3^m
\end{equation*}     
pairs with distance less than $2/N$. To see this, we mention that for a fixed $j$ there are $3^j$ possible choices for the word $a_1 \ldots a_j$. As we are considering ordered pairs the factor $2$ in front of the sum is necessary. Another possibility which yields pairs with a prescribed distance, is to consider words which have starting blocks of the form $\underbrace{a_1 \ldots a_j 1 0 \ldots 0}_{m \text{ digits }} $ and $\underbrace{a_1 \ldots a_j 0 2 \ldots 2}_{m \text{ digits } }$. Again, also in this case, we obtain $3^m -3$ ordered pairs with distance less than $2/N$. Finally, consider words of the form $\underbrace{a_1 \ldots a_{m-1} 2}c_1c_2c_3 \ldots $ and $\underbrace{a_1 \ldots a_{m-1} 0}d_1d_2d_3\ldots$, with $d_1d_2d_3 \ldots > c_1c_2c_3 \ldots$. Here, we get $3^{m-1}2$ possible pairs. All these pairs have a distance less than $2/3^m$, due to the carry caused by the requirement $d_1d_2d_3 \ldots > c_1c_2c_3 \ldots$. \\ \\
Above cases show that there are at most $2(3^m + 3^{m-1} -3)$ pairs having a distance less than $2/N$. We therefore have for the pair correlation statistics 
\begin{equation*}
\lim_{N \to \infty} F_N(2) \leq 8/3 < 2s.
\end{equation*}
Note that the amount of pairs with distance $> 1-2/N$ is negligible; the only possible structure for two words achieving this distance is $a_1 \ldots a_m = 2 \ldots 2$ and $\tilde{a}_1 \ldots \tilde{a}_m = 0 \ldots 0$.
\end{proof}
\section{Proof of Theorem 3}
\begin{proof}
We define the sequence $x_n := \lbrace 2^n \alpha_{2,3} \rbrace$, $n=0, 1, 2, \ldots $. We have 
\begin{equation*}
\lbrace 2^n \alpha_{2,3} \rbrace = \Bigg\lbrace \sum_{m=1}^{\lfloor \log_3 n \rfloor} \frac{2^{n-3^m} \mod 3^m }{3^m} \Bigg\rbrace + \sum_{m= \lfloor \log_3 n \rfloor +1}^{\infty} \frac{2^{n-3^m}}{3^m}.
\end{equation*}
For the first term of the above expression, it is known that it can be expressed by the recursion $z_0=0$, and for $n \geq 1$, $z_n = \lbrace 2z_{n-1}+r_n \rbrace$, where $r_n=1/n$ if $n=3^k$ for some integer $k$, and zero otherwise. Moreover, it can be proven that if $n < 3^{q+1}$, for some integer $q$, then $z_n$ is a multiple of $1/3^q$ and appears exactly three times amongst the elements $z_0, \ldots, z_{3^{q+1}-1}$. To be more precise, the sequence elements $z_n$ (appearing three times) have the form $j/3^{\tilde{q}}$, where $\tilde{q} \leq q$ and $\gcd(j,3)=1$. For details on this description, see, e.g., \cite{not16,not18}. We choose $N=2^w$, for some integer $w$ and $s=1$ in the definition of the pair correlation statistics. Let the integer $l$ be chosen in such a way that $3^l < N=2^w < 3^{l+1}$. First, we give some additional information on the distribution of the sequence $z_n$ in the unit interval. \\
 
Assume, we are given an interval $[c,d)$, where $c=0.y_1y_2y_3 \ldots y_w$, with $y_i \in \lbrace 0,1 \rbrace$ for $i=1, \ldots, w$, and $d$ is the next largest binary fraction of length $w$, such that $d-c=2^{-w}$. It follows that the sequence elements $z_n$ (for $n < N$), are multiples of $1/3^l$. Further, note that the interval $[c,d)$ has length $1/2^w=1/N$. Therefore, it contains at most $\lfloor 3^l 2^{-w} \rfloor + 1 = 1$ integer multiples of $1/3^l$, i.e., either 3 elements of the sequence $z_n$ are contained in the interval $[c,d)$ or no element at all. These considerations suggest that two sequence elements $\lbrace 2^{n_1} \alpha_{2,3} \rbrace$, $\lbrace 2^{n_2} \alpha_{2,3} \rbrace$ with $z_{n_1} = z_{n_2}$ have small distance (at least most of them), i.e., distance less than $1/N$ and those with $z_{n_1} \neq z_{n_2}$ a distance larger than $1/N$. In the sequel, if we use the expression "most of the elements", we mean all such elements except $o(N)$ many.  \\

Consider now the difference between two sequence elements $\lbrace 2^{n_1} \alpha_{2,3} \rbrace$, \\ $\lbrace 2^{n_2} \alpha_{2,3} \rbrace$, with $n_1 \neq n_2 < N$. We have 
\begin{equation*}
\lbrace 2^{n_1} \alpha_{2,3} \rbrace - \lbrace 2^{n_2} \alpha_{2,3} \rbrace = z_{n_1} - z_{n_2} + \sum_{m= \lfloor \log_3 n_1 \rfloor +1}^{\infty} \frac{2^{n_1-3^m}}{3^m} - \sum_{m= \lfloor \log_3 n_2 \rfloor +1}^{\infty} \frac{2^{n_2-3^m}}{3^m}.
\end{equation*}  
First, we demonstrate that indeed most of the sequence elements with a common value for $z_n$ have a small distance. Since by a basic property of the Eulerian totient function, we know that there are indices 
\begin{equation*}
n_i \leq 3^l-2(3^{l-1} -3^{l-2}) = 3^l-3^{l-2}2,
\end{equation*}
such that $z_{n_i} = z_{n_i + 3^{l-1} -3^{l-2}} = z_{n_i + 2(3^{l-1} -3^{l-2})}$ (recall that the nominators of the elements of the sequence $z_n$ are relatively prime to the denominators). The difference between the sequence elements $x_{n_i}$ and $x_{n_i + 3^{l-1} -3^{l-2}}$ can therefore be expressed by (similar for the other two differences)
\begin{equation*}
| \sum_{m= \underbrace{\lfloor \log_3 n_i \rfloor +1}_{=l}}^{\infty} \frac{2^{n_i-3^m}}{3^m} \left(1-2^{{3^{l-2}2}} \right)  |,
\end{equation*}
which is less than $1/N$ for most values of $n_i$. Therefore, we roughly estimate the number of pairs $(x_{n_i}, x_{n_j})$ (for indices $n_i, n_j  \leq 3^l$) with distance $< 1/N$ from above by $3^{l-1}6= 3^l 2 < 2^{w+1}$. To see this, we note that we have, for indices $n_i, n_j, n_k \leq 3^l$, at most $3^{l-1}$ triples $(x_{n_i}, x_{n_j}, x_{n_k})$, where each such triple yields $6$ ordered pairs with distance $<1/N$. If the sequence elements $x_n$, $3^l < n < 2^w $, have distance $> 1/N$ from each other, we immediately get that the pair correlation statistics satisfies $F_N(1)< 2^{w+1}/2^w = 2$ (note that for now we have assumed that two distinct sequence elements $x_{n_1}$ and $x_{n_2}$, with $z_{n_1} \neq z_{n_2}$ and $0 \leq n_1 \neq n_2  < N$, do have a distance $> 1/N$ (at least, in some sense, most of them)). Let us now consider the case that we get additional $3^{l-1} 6 $ pairs due to the sequence elements with indices $>3^l$. This means that $3^l2 < 2^w$ and consequently 
\begin{equation*}
F_N(1) \leq \frac{3^{l}2 + 3^{l-1}6}{2^w} = \frac{3^l( 2 + 2)}{2^w} < 2.
\end{equation*}

It remains to show that two distinct sequence elements $x_{n_1}$ and $x_{n_2}$, with $z_{n_1} \neq z_{n_2}$ and $0 \leq n_1 \neq n_2  < N$, do have a distance $> 1/N$ (at least, in some sense, most of them). Succeeding in showing this fact, would allow us to conclude that the Stoneham number is not Poissonian. We know that 
\begin{equation*}
| z_{n_1} - z_{n_2} | = \Big| \frac{c_1}{3^{l_1}} - \frac{c_2}{3^{l_2}} \Big| = \Big| \frac{1}{3^{l_2}}(\tilde{c}_1 - c_2) \Big| \geq 1/3^l > 1/N, 
\end{equation*}
where $c_1$ is relatively prime to $3^{l_1}$ and $c_2$ to $3^{l_2}$. In the sequel, we assume $z_{n_1} > z_{n_2}$. If 
\begin{equation*}
\sum_{m= \lfloor \log_3 n_1 \rfloor +1}^{\infty} \frac{2^{n_1-3^m}}{3^m} - \sum_{m= \lfloor \log_3 n_2 \rfloor +1}^{\infty} \frac{2^{n_2-3^m}}{3^m} \geq  0,
\end{equation*}
then we are done.  \\ 
In case this expression is negative, we use the following argument. The first summand of 
\begin{equation*}
\sum_{m= \lfloor \log_3 n_2 \rfloor +1}^{\infty} \frac{2^{n_2-3^m}}{3^m} = \sum_{m= l_2+1}^{\infty} \frac{2^{n_2-3^m}}{3^m}
\end{equation*}
is 
\begin{equation*}
\frac{1}{3^{l_2 +1} 2^{3^{l_2+1}-n_2}}.
\end{equation*}
If $\tilde{c}_1 - c_2 \geq 2$, $3^{l_2+1}-n_2 \geq 1$, then we have $| x_{n_1} - x_{n_2} | > 1/2^w$. It remains to check the case that $\tilde{c}_1 - c_2 = 1$ and $3^{l_2+1}-n_2 =1$. This might yield pairs with distance less than $1/N$, but at most $2l = \mathcal{O}(\ln N)$ pairs, i.e., a negligible number if we consider the relative amount. 
\end{proof}
  
\textbf{Author's Addresses:} \\ 
Gerhard Larcher and Wolfgang Stockinger, Institut f\"ur Finanzmathematik und Angewandte Zahlentheorie, Johannes Kepler Universit\"at Linz, Altenbergerstra{\ss}e 69, A-4040 Linz, Austria. \\ \\ 
Email: gerhard.larcher(at)jku.at, wolfgang.stockinger(at)jku.at      
\end{document}